\newtheorem{lemma}{\rm \indent LEMMA}[section]
\newtheorem{definition}{\rm \indent DEFINITION}[section]
\newtheorem{theorem}[lemma]{\rm \indent THEOREM}
\newtheorem{proposition}[lemma]{\rm \indent PROPOSITION}
\newtheorem{corollary}[lemma]{\rm \indent COROLLARY}
\newcommand{\eps}{\varepsilon}
\newcommand{\R}{\mathbb{R}}
\renewcommand{\le }{\leqslant }
\renewcommand{\ge }{\geqslant }
\newcommand{\G}{\Gamma}
\numberwithin{equation}{section}
\begin{document}
\title{\bf{ SMOOTHNESS DEPENDENT STABILITY IN CORROSION DETECTION}}
\author{EVA SINCICH\thanks{Laboratory for Multiphase Processes, University
of Nova Gorica, Slovenia. Email: eva.sincich@ung.si}}

\date{}
\maketitle \footnotesize{\textbf{Abstract.} 
We consider the stability issue for the determination of a linear corrosion in a conductor by a single electrostatic measurement. We established a global \emph{log-log} type stability when the corroded boundary is simply Lipschitz. We also improve such a result obtaining a global \emph{log} stability by assuming that the damaged boundary is $C^{1,1}$-smooth.

\section{\normalsize{Introduction}}\label{sec1}
\setcounter{equation}{0} 

In this paper we study the stable determination of a corrosion coefficient on an inaccessible boundary by means of electrostatic measurements.

More precisely, we consider

\begin{equation}\label{P}
\left\{
\begin{array}
{lcl}
\Delta u=0\ ,& \mbox{in $\Omega$ ,}
\\
\dfrac{\partial u}{\partial \nu}=g\ ,   & \mbox{on $\G_A$ ,}
\\
\dfrac{\partial u}{\partial \nu} + \gamma u =0 \ , & \mbox{on $\G_I$ ,}
\end{array}
\right.
\end{equation}
where $\Gamma_A$ and $\Gamma_I$ are two open, disjoint portions of $\partial \Omega$ such that $\partial \Omega = \overline{\Gamma_A}\cup\overline{\Gamma_I}$ and $\Omega\subset \mathbb{R}^n, \ n\ge 2$. The portion $\Gamma_A$ corresponds to the part of boundary which is accessible to measurements while $\Gamma_I$ is the portion which is out of reach and where the corrosion damage occurs. The function $\gamma$ is known as corrosion coefficient and it models the surface impedance of the conductor. The inverse problem we address here consists in the determination of such $\gamma$ by means of the current density $g$ prescribed on $\Gamma_A$ and the corresponding measured potential $u|_{\Gamma_A}$.  In particular, we are interested in providing \emph{global} stability estimates for $\gamma$, or namely avoiding the \emph{a priori} hypothesis that the unknown corrosion coefficient is a small perturbation of a given and known one.  

Our first aim is to investigate the continuous dependence of $\gamma$ upon the data when the corroded boundary $\Gamma_I$ is merely \emph{Lipschitz}. To this purpose, we notice that by the impedance condition in \eqref{P} we can formally compute $\gamma$ as 
\begin{eqnarray}\label{quoziente}
\gamma(x)= -\frac{1}{u(x)}\frac{\partial u(x)}{\partial \nu} \ . 
\end{eqnarray}
Since the potential $u$ may vanish in some points on $\Gamma_I$, it follows that the above quotient may be highly unstable. In this respect it is necessary to compute the local vanishing rate of $u$ on $\Gamma_I$. Indeed, we proved that such a rate can be controlled in an exponential manner as follows 
\begin{eqnarray}
\int_{\Delta_r(x_0)}u^2\ge \exp(-K r^{-K})
\end{eqnarray}
where $K>0$ and $\Delta_r(x_0)=B_r(x_0)\cap \Gamma_I$ with $x_0\in \Gamma_I^{2r}\subset \Gamma_I$ (see Subsection \ref{def} for a precise definition) for sufficiently small radius $r$ (see Subsection \ref{lb}).
By combining such a control with a logarithmic stability estimate for the underlying Cauchy problem we are able to prove a \emph{global} stability estimate for $\gamma$ with a \emph{log-log} type  modulus of continuity.  

The second purpose of this paper is to strengthen the hypothesis on the corroded boundary assuming that $\Gamma_I$ is \emph{$C^{1,1}$-smooth} in order to obtain a better rate of stability. Indeed under such additional a priori hypothesis, we derive a surface doubling inequality of this sort for sufficiently small radius $r$ (see Subsection \ref{cb}).
\begin{eqnarray}\label{di}
\int_{\Delta_{2r}(x_0)}u^2\le \emph{const.}\int_{\Delta_{r}(x_0)}u^2\ ,
\end{eqnarray}
which allows us to deduce that the vanishing rate of $u$ at the boundary is at most polynomial, that is 
\begin{eqnarray}
\int_{\Delta_r(x_0)}u^2\ge \frac{1}{K}r^{K} \  , 
\end{eqnarray}
for sufficiently small radius $r$ (see Subsection \ref{cb}).
Again by gathering a logarithmic stability estimate for the Cauchy problem and the above vanishing rate we provide a \emph{global} stability estimate for $\gamma$ with a \emph{single} \emph{log}. 

In addition we also give an alternative proof of the above mentioned \emph{global logarithmic} stability estimate. Such an alternative argument mostly relies on the application of the theory of the Muckehhoupt weights which justifies the computation in \eqref{quoziente} in the $L^{\frac{2}{p-1}}$ sense for some $p>1$.

Indeed, such a dependence of the modulus of continuity upon the smoothness of the boundary have been already observed in other contexts. In \cite{ABRV}, inverse problems for the determination of unknown defects with Dirichlet and Neumann condition have been studied. The authors proved that when the unknown boundary is smooth enough and hence a doubling inequality at the boundary is available then stability turns out to be of \emph{logarithmic} type. On the contrary relaxing the regularity assumptions on the unknown domain the rate of stability degenerates into a \emph{log-log} type one. 

Let us mention that global stability estimates for unknown boundary impedance coefficients have been previously discussed under analogous boundary smoothness assumptions in \cite{asv} and \cite{S} for an inverse acoustic scattering problem.

The present inverse problem has been studied in \cite{ADR} and in \cite{CFJL} in a two dimensional setting where the authors provided a \emph{global logarithmic} stability estimate for the corrosion coefficient for $C^{1,\alpha}$ corroded boundary. 

Similar inverse problems have been studied for the heat equation \cite{BCC} and for the Stokes equations \cite{BEG}, where \emph{logarithmic} stability estimates for the Robin coefficient $\gamma$ have been provided. However in such papers the analysis on the local vanishing control of the solution has not been carried over and as a consequence the stability results are stated \emph{only} on a compact set where the solution does not vanish.  

The paper is organized as follows. In Section \ref{sec2} we introduce notation and definition, the main assumptions and we state our main results in Theorem \ref{stabrough} and in Theorem \ref{stabimproved}. In Section \ref{sec4} we preliminary analyse the direct problem recalling some regularity properties of the solution in Lemma \ref{regolaritarough} and Lemma \ref{regolaritaimproved}. Moreover, in Theorem \ref{h1} we provide an a priori bound of the boundary trace of the solution in the $H^1$ norm. The proof of such a bound relies on the well-known Rellich's identity. In Subsection \ref{lb} we discuss the inverse problem under the a priori hypothesis of a merely Lipschitz boundary. In Theorem \ref{stabcp} we recall a known stability result for the underlying Cauchy problem based on unique continuation tools, while in Corollary \ref{cor} we use the latter result in order to deduce the stability for negative norms of the normal derivative of $u$. In Theorem \ref{lipprop} we provide a lower bound on the local vanishing rate of the solution $u$. The main ingredient of the proof is the so called Lipschitz Propagation of Smallness, see also \cite{ABRV, MR2}.  Finally in Proposition \ref{weight} we state a weighted interpolation inequality which was previously introduced in \cite{asv} and we conclude by giving the proof of Theorem \ref{stabrough}. In Subsection \ref{cb} we treat the inverse problem under the further $C^{1,1}$ a priori smoothness assumption on $\Gamma_I$. In Theorem \ref{stabcp2} we recall a stability result for the Dirichlet trace of the solution in $C^1$ norm. The increased smoothness regularity hypothesis on $\Gamma_1$  allows us to refine the analysis on the local vanishing control of the solution, indeed in Proposition \ref{di} a surface doubling inequality is provided. 
We use such an inequality as tool to state in Theorem \ref{lipprop2} the polynomial rate of decay of the solution at the boundary. The main argument of this proof again relies on Lipschitz Propagation of Smallness estimates, see also \cite{AMR2}. In Proposition \ref{weight2} we state a weighted interpolation inequality for a weight satisfying a polynomial vanishing rate. We conclude by giving a proof of Theorem \ref{stabimproved}. As already mentioned, we also provide another way to obtain the logarithmic stability results which involves in Proposition \ref{mw} the notion of Muckenhoupt weights \cite{CoFe}. We complete Section \ref{sec5} with an alternative proof of the Theorem \ref{stabimproved} relying on the result achieved in Proposition \ref{mw}.

\section{\normalsize{Main Results}}\label{sec2}
\setcounter{equation}{0}
\subsection{\normalsize{Notation and definitions}}\label{def}
We introduce some notation that we shall use in the sequel.

For any $x_0\in \partial \Omega$ and for any $\rho>0$ we shall denote
\begin{eqnarray}
&&\Gamma_A^{\rho}=\{x\in \Gamma_A : \mbox{dist}(x,\Gamma_I)>\rho\}\ ,\\
&&\Gamma_I^{\rho}=\{x\in \Gamma_I : \mbox{dist}(x,\Gamma_A)>\rho\}\ ,\\
&&\Gamma_{\rho}(x_0)=B_{\rho}(x_0)\cap \overline{\Omega}\ ,\\
&&\Delta_{\rho}(x_0)=B_{\rho}(x_0)\cap \partial{\Omega}\ .
\end{eqnarray}

\begin{definition}
We shall say that a domain $\Omega$ is of \emph{Lipschitz class with constants} $r_0, M>0$ 
if for any $P \in \partial\Omega$, there exists a rigid transformation of coordinates under which we have $P=0$ and 
\begin{eqnarray}\label{chap2:1l}
 \Omega \cap B_{r_0}=\{(x',x_n): x_n>\varphi(x')\}\ 
\end{eqnarray}
where
\begin{eqnarray}\label{chap2:2l}
\varphi:B^{'}_{r_0}\subset \R^{n-1} \rightarrow \mathbb{R} 
\end{eqnarray}
 is a  Lipschitz function satisfying 
\begin{eqnarray}\label{chap2:25l}
|\varphi(0)|=|\nabla \varphi(0)|=0\ \ \mbox{and}\ \ \|\varphi\|_{C^{0,1}(B^{'}_{r_0})}\le Mr_0\ ,
\end{eqnarray}
where we denote by
$$\|\varphi\|_{C^{0,1}( B'_{r_0}(x_0))}=\|\varphi\|_{L^{\infty}(
B'_{r_0}(x_0))} +\ r_0\!\!\!\!\!\!\!\!\!\sup_{\substack {x,y  \in
B'_{r_0}(z_0)\\x\ne y }}
\frac{|\varphi (x)-\varphi(y)|}{|x-y|} _{\ \ \  } $$
and $B'_{r_0}(x_0)$ denotes a ball in $\R^{n-1}$.

\end{definition}

\begin{definition}
Given $ \alpha,\ 0<\alpha\le 1$, we shall say that a domain $\Omega$ is of \emph{class $C^{1,\alpha}$ with constants} $r_0, M>0$ 
if for any $P \in \partial \Omega$, there exists a rigid transformation of coordinates under which we have $P=0$ and 
\begin{eqnarray}\label{chap2:1}
 \Omega \cap B_{r_0}=\{(x',x_n): x_n>\varphi(x')\}\ 
\end{eqnarray}
where
\begin{eqnarray}\label{chap2:2}
\varphi:B^{'}_{r_0}\subset \R^{n-1} \rightarrow \mathbb{R} 
\end{eqnarray}
 is a  $C^{1,\alpha}$ function satisfying 
\begin{eqnarray}\label{chap2:25}
|\varphi(0)|=|\nabla \varphi(0)|=0\ \ \mbox{and}\ \ \|\varphi\|_{C^{1,\alpha}(B^{'}_{r_0})}\le Mr_0\ ,
\end{eqnarray}
where we denote
\begin{eqnarray}\label{chap2:3}
\|\varphi\|_{C^{1, \alpha}( B^{'}_{r_0})}&=&\|\varphi\|_{L^{\infty}( B^{'}_{r_0})}+ r_0\|\nabla\varphi\|_{L^{\infty}( B^{'}_{r_0})} + \\
&+&{r_0}^{1+\alpha}\sup_{\substack {x,y  \in B^{'}_{r_0}\\x\ne y }}\frac{|\nabla\varphi (x)-\nabla \varphi (y)|}{|x-y|^{\alpha}}\  .\nonumber
\end{eqnarray}
\end{definition}
\
\subsection{\normalsize{Assumptions and a-priori information}}

{\bf{Assumption on the domain}}

Given $r_0,M>0$ constants, we assume that $\Omega \subset \R^{n}$ and 
\begin{eqnarray}\label{domain}
\Omega \   \mbox{is of} \ \ \mbox{Lipschitz class with constants}\ r_0, M.
\end{eqnarray} 

Moreover, we assume that 
\begin{eqnarray}\label{diameter}
\mbox{the diameter of}\ \Omega \ \mbox{is bounded by}\ d_0\ .
\end{eqnarray} 
{\bf{Assumption on $\gamma$}}

Given $\gamma_0>0$ constant we assume that the Robin coefficient $\gamma\ge 0$ is such that  $\mbox{supp}\ \gamma \subset \Gamma_I$ and 
\begin{eqnarray}\label{boundgamma}
\|\gamma\|_{C^{0, 1}(\Gamma_I)}\le \gamma_0 \ .
\end{eqnarray}
{\bf{Assumption on $g$}}

Given $E, \hat{r}$ positive constants we assume that the current flux $g$ is such that $\mbox{supp} \ g\subset \Gamma_A^{\hat{r}}$ and 
\begin{eqnarray}\label{boundg}
\|g\|_{C^{0, \alpha}(\Gamma_A)}\le E \ .
\end{eqnarray}

From now on we shall refer to the a-priori data as the following set of quantities $r_0, M, d_0, \gamma_0, E, \hat{r}$.

In the sequel we shall denote with $\eta(t)$ a positive increasing concave
function defined on $(0, +\infty)$, that satisfies
\begin{eqnarray}\label{eta}
&&\eta(t)\le C(\log (t))^{-\vartheta},\ \ \ \mbox{for
every}\ \ 0<t<1 \ ,
\end{eqnarray}
where
$C>0,\vartheta>0$ are constants depending on the \emph{a priori
data} only.

Let us fix an open connected portion $\Gamma$ of the boundary of $\Omega$. We introduce the trace space $H_{00}^{\frac{1}{2}}(\Gamma)$  as the interpolation space $[H^{1}_0(\Gamma),L^2(\Gamma)]_{\frac{1}{2}}$, we refer to \cite[Chap.1]{LiMa} for further details . The functions in $H_{00}^{\frac{1}{2}}(\Gamma)$ might be also characterized as the elements in $H^{\frac{1}{2}}(\partial \Omega)$ which are identically zero outside $\Gamma$ , this identification shall be understood throughout. We denote with $H_{00}^{-\frac{1}{2}}(\Gamma)$ its dual space, which also can be interpreted as a subspace of $H^{-\frac{1}{2}}(\partial \Omega)$.

\subsection{\normalsize{The main results}}

\begin{theorem}\label{stabrough}
Let $\Omega$ be a Lipschitz domain and let $\gamma_1,\gamma_2$ satisfy \eqref{boundgamma}. Let $u_i, i=1,2$ be the weak solution to the problem \eqref{P} with $\gamma=\gamma_i$ respectively. If for some $\eps$, we have 
\begin{eqnarray}\label{err}
 \|u_1 -u_2\|_{L^2(\Gamma_A)}\le \eps
\end{eqnarray}
then 
\begin{eqnarray}
\|\gamma_1 -\gamma_2 \|_{L^{\infty}(\Gamma_I^{r_0})}\le \eta \circ \eta (\epsilon)
\end{eqnarray}
\end{theorem}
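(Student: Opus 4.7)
The plan is to reduce the stability problem to a combination of Cauchy stability and a quantitative non--vanishing control for $u_2$, exactly as the paper's introduction foreshadows. First I set $w=u_1-u_2$. Subtracting the two Robin conditions on $\Gamma_I$ gives the crucial algebraic identity
\[
(\gamma_1-\gamma_2)\,u_2 \;=\; -\partial_\nu w - \gamma_1 w \quad\text{on }\Gamma_I.
\]
Since $u_1$ and $u_2$ share the same Neumann datum $g$ on $\Gamma_A$, the difference $w$ is harmonic in $\Omega$ with $\partial_\nu w=0$ on $\Gamma_A$ and $\|w\|_{L^2(\Gamma_A)}\le\eps$, so the entire problem is reframed as controlling $\gamma_1-\gamma_2$ pointwise from the trace of $w$ and the smallness of $u_2$.

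Next I would invoke the Cauchy stability estimate of Theorem \ref{stabcp}, combined with the negative--norm version in Corollary \ref{cor}, to propagate smallness from $\Gamma_A$ into the interior portion $\Gamma_I^{r_0}$ of the corroded boundary. The output is a bound of the form
\[
\|w\|_{L^2(\Gamma_I^{r_0})} + \|\partial_\nu w\|_{H_{00}^{-1/2}(\Gamma_I^{r_0})} \;\le\; \eta(\eps),
\]
with $\eta$ satisfying \eqref{eta}. In view of the identity above and the a priori Lipschitz bound \eqref{boundgamma} on $\gamma_1$, this translates into smallness for the weighted quantity $(\gamma_1-\gamma_2)\,u_2$ in the corresponding norm on $\Gamma_I^{r_0}$, still with a single logarithmic rate.

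The remaining task is to divide by $u_2$ in a quantitatively safe way, which is where the two subsection-\ref{lb} tools enter. On one side, the Lipschitz Propagation of Smallness estimate of Theorem \ref{lipprop} yields the exponential lower bound
\[
\int_{\Delta_r(x_0)}u_2^2 \;\ge\; \exp(-Kr^{-K}),\qquad x_0\in\Gamma_I^{2r},
\]
for all sufficiently small $r$, uniform in $x_0$. On the other side, the weighted interpolation inequality of Proposition \ref{weight}, designed exactly for a Lipschitz function (here $\gamma_1-\gamma_2$, whose $C^{0,1}$ norm is controlled by $2\gamma_0$) weighted against $u_2^2$, allows one to recover an $L^\infty(\Gamma_I^{r_0})$ bound from the weighted $L^2$ smallness together with the lower bound on the weight. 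Optimizing over the free radius $r$ yields the stated estimate.

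The main obstacle, and the source of the double logarithm, is this last optimization: one must balance the logarithmic Cauchy bound $\eta(\eps)$ against the \emph{exponentially} small lower bound $\exp(-Kr^{-K})$ on $u_2$. Choosing $r$ to equalize the two competing terms costs an extra logarithm, so the final modulus of continuity is $\eta\circ\eta$. This loss is intrinsic to the Lipschitz regularity assumption, since no polynomial vanishing rate is available; indeed, the improvement in Theorem \ref{stabimproved} is achieved precisely by replacing this exponential lower bound with the polynomial one afforded by the surface doubling inequality under $C^{1,1}$ regularity.
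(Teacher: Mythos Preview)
Your outline follows essentially the same route as the paper: Cauchy stability (Theorem \ref{stabcp} and Corollary \ref{cor}) to control $(\gamma_1-\gamma_2)u_i$, the exponential lower bound of Theorem \ref{lipprop}, and then Proposition \ref{weight} to extract the $L^\infty$ estimate on $\gamma_1-\gamma_2$, with the second logarithm arising from the optimization inside that proposition. The only step you leave implicit is how to pass from the $H_{00}^{-1/2}$ control on $(\gamma_1-\gamma_2)u_2$ to the $L^2$ (hence $L^1$) smallness that Proposition \ref{weight} actually requires; the paper handles this via the interpolation $\|u_1(\gamma_1-\gamma_2)\|_{L^2}\le C\|u_1(\gamma_1-\gamma_2)\|_{H^1}^{1/3}\|u_1(\gamma_1-\gamma_2)\|_{H_{00}^{-1/2}}^{2/3}$ together with the a priori $H^1(\partial\Omega)$ bound from Theorem \ref{h1}, and then applies Proposition \ref{weight} with $w=|u_1|$ and $f=(\gamma_1-\gamma_2)^2$ rather than $f=\gamma_1-\gamma_2$ (a cosmetic difference only).
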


\begin{theorem}\label{stabimproved}
Let $\Omega$ be a $C^{1,\alpha}$ domain with $0<\alpha\le 1$ and let $\gamma_1,\gamma_2$ satisfy \eqref{boundgamma}. Furthermore, we assume that $\Gamma_I$ is of class $C^{1,1}$ with constants $r_0, M$. Let $u_i, i=1,2$ be the weak solution to the problem \eqref{P} with $\gamma=\gamma_i$ respectively. If for some $\eps$, we have 
\begin{eqnarray}
 \|u_1 -u_2\|_{L^2(\Gamma_A)}\le \eps
\end{eqnarray}
then 
\begin{eqnarray}
\|\gamma_1 -\gamma_2 \|_{L^{\infty}(\Gamma_I^{r_0})}\le\eta (\epsilon)
\end{eqnarray}
\end{theorem}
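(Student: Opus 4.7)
The plan is to mimic the structure of the proof of Theorem \ref{stabrough}, but to take advantage of the $C^{1,1}$-smoothness of $\Gamma_I$ to replace the exponential lower bound on the vanishing rate of $u$ with a polynomial one; this is precisely what will save one logarithm. The starting point is the algebraic identity that follows from the impedance condition $\partial_\nu u_i+\gamma_i u_i=0$ on $\Gamma_I$, namely, for a.e.\ $x\in\Gamma_I^{r_0}$,
\begin{equation*}
(\gamma_1-\gamma_2)(x)\,u_1(x)=-\partial_\nu(u_1-u_2)(x)+\gamma_2(x)\bigl(u_2(x)-u_1(x)\bigr),
\end{equation*}
so that bounding $\|\gamma_1-\gamma_2\|_{L^\infty(\Gamma_I^{r_0})}$ amounts to controlling the Cauchy data $u_1-u_2$ and $\partial_\nu(u_1-u_2)$ on $\Gamma_I$ while dividing by the weight $u_1$, which generically vanishes.

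First I would propagate the smallness $\|u_1-u_2\|_{L^2(\Gamma_A)}\le\eps$ from $\Gamma_A$ to $\Gamma_I^{r_0}$. The $H^1$ boundary bound of Theorem \ref{h1} applied to $u_1-u_2$, combined with Corollary \ref{cor} for the conormal trace and Theorem \ref{stabcp2} (the logarithmic $C^1$-stability for the Cauchy problem, which is available because $\Omega$ is $C^{1,\alpha}$ and $\Gamma_I$ is $C^{1,1}$), yields
\begin{equation*}
\|u_1-u_2\|_{C^1(\overline{\Gamma_I^{r_0}})}\le\eta(\eps),
\end{equation*}
with $\eta$ of logarithmic type as in \eqref{eta}. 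This takes care of the numerator in the identity above.

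Next I would quantify the non-degeneracy of the denominator $u_1$. Under the $C^{1,1}$ hypothesis, the surface doubling inequality for $u_1^2$ on $\Delta_r(x_0)$ mentioned as \eqref{di} in the introduction gives the polynomial lower bound of Theorem \ref{lipprop2},
\begin{equation*}
\int_{\Delta_r(x_0)}u_1^2\ge \frac{r^{K}}{K},\qquad x_0\in\Gamma_I^{2r},
\end{equation*}
for all sufficiently small $r$, with $K$ depending only on the a priori data. This is the key place where $C^{1,1}$ enters and where the Lipschitz proof had to settle for an exponential lower bound instead.

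Finally, I would feed these two ingredients into the weighted interpolation inequality of Proposition \ref{weight2}, which is tailored precisely for weights enjoying a polynomial vanishing rate. Writing $w:=\gamma_1-\gamma_2$ and using the identity above together with the uniform $C^{0,1}$-bound on $\gamma_2$, the interpolation gives
\begin{equation*}
\|w\|_{L^\infty(\Gamma_I^{r_0})}\le C\bigl(\|u_1-u_2\|_{C^1(\overline{\Gamma_I^{r_0}})}\bigr)^{\theta}
\end{equation*}
for some $\theta\in(0,1)$ depending only on the polynomial exponent $K$ and the a priori data, which combined with the previous step yields $\|\gamma_1-\gamma_2\|_{L^\infty(\Gamma_I^{r_0})}\le\eta(\eps)$, a single logarithm. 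The critical point, and the step I expect to be the main obstacle, is precisely the application of Proposition \ref{weight2}: one must verify its hypotheses for the weight $u_1$ uniformly in $x_0\in\Gamma_I^{r_0}$ and then track carefully how the polynomial rate $K$ enters the interpolation exponent $\theta$, so that the final modulus of continuity remains logarithmic rather than degrading into a double logarithm as happens in the Lipschitz regime of Theorem \ref{stabrough}.
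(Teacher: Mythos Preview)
Your proposal is correct and follows essentially the same route as the paper's first proof: use the impedance identity to reduce to bounding $(\gamma_1-\gamma_2)u_1$, invoke Theorem~\ref{stabcp2} for the $C^1$ stability of the Cauchy data on $\Gamma_I$, use Theorem~\ref{lipprop2} for the polynomial lower bound on $\int_{\Delta_r}u_1^2$, and conclude via Proposition~\ref{weight2}. Two small remarks: in the $C^{1,1}$ setting Theorem~\ref{stabcp2} already delivers the $C^1$ bound on $u_1-u_2$ (including $\partial_\nu(u_1-u_2)$), so the detours through Theorem~\ref{h1} and Corollary~\ref{cor} are unnecessary; and your notation $w:=\gamma_1-\gamma_2$ clashes with Proposition~\ref{weight2}, where $w$ denotes the weight $|u_1|$ and $f$ is the H\"older function (the paper takes $f=(\gamma_1-\gamma_2)^2$).
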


\section{\normalsize{The direct problem}}\label{sec4}
\begin{lemma}\label{regolaritarough}
Let $\Omega$ be a Lipschitz domain.
Let $u\in H^1(\Omega)$ be a solution to \eqref{P} with $\gamma$ and $g$ satisfying the a-priori assumptions stated above. Then there exists a constant $0<\alpha<1$ and a constant $C>0$ depending on the a-priori data only, such that $u\in C^{\alpha}(\bar{\Omega})$ , such that 
\begin{eqnarray}\label{holderrough}
\|u\|_{C^{\alpha}(\bar{\Omega})}\le C \ .
\end{eqnarray}
\end{lemma}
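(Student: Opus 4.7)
The plan is to combine a standard energy estimate with boundary De Giorgi--Nash--Moser type regularity for elliptic equations in Lipschitz domains. First I would test the weak formulation of \eqref{P} against $u$ itself to obtain
\[
\int_{\Omega} |\nabla u|^2 \, dx + \int_{\Gamma_I} \gamma\, u^2 \, dS = \int_{\Gamma_A} g\, u \, dS,
\]
and combine it with the trace theorem and a Poincar\'e--Robin inequality, exploiting $\gamma \geq 0$ and the Lipschitz character of $\partial\Omega$, to produce an a priori bound $\|u\|_{H^1(\Omega)} \leq C$ with $C$ depending only on the a priori data. A Moser iteration adapted to bounded Robin/Neumann conditions then upgrades this to the $L^\infty$ bound $\|u\|_{L^\infty(\Omega)} \leq C$.

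With the $L^\infty$ bound in hand, I would derive the H\"older estimate by a covering argument. In the interior, analyticity of harmonic functions suffices. At points of $\Gamma_A$ away from the interface, I would flatten the boundary via a bi-Lipschitz change of coordinates and invoke the boundary H\"older regularity theorem for elliptic equations in Lipschitz domains with H\"older Neumann data (in the spirit of Ladyzhenskaya--Ural'tseva or Gilbarg--Trudinger). At points of $\Gamma_I$, the Robin condition $\partial_\nu u = -\gamma u$ becomes a bounded Neumann-type condition, since $u$ is now bounded and $\gamma \in C^{0,1}$, so the same local boundary regularity estimates apply and give a uniform exponent $\alpha$ and constant.

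The main technical point I anticipate is the interface $\overline{\Gamma_A}\cap\overline{\Gamma_I}$, where in principle mixed boundary conditions on a Lipschitz surface could produce corner-type singularities that spoil the global H\"older exponent. This is defused precisely by the a priori assumptions: $g$ is compactly supported in $\Gamma_A^{\hat{r}}$, so the Neumann datum vanishes in a neighbourhood of the interface, while $\mbox{supp}\,\gamma \subset \Gamma_I$ ensures the Robin coefficient extends continuously by zero across the interface. Consequently, near the interface the boundary condition reduces to a single homogeneous Neumann condition with a bounded coefficient, and uniform $C^\alpha$ estimates there again follow from the Lipschitz-domain boundary theory. A finite covering of $\bar\Omega$ by balls of radius comparable to $r_0$ then assembles the local estimates into the global bound \eqref{holderrough}.
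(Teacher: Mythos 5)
Your proposal is correct and follows essentially the same route the paper takes: the paper's proof is a one-line appeal to Moser iteration (\cite[Theorem 8.18]{gt}) and to the detailed arguments in \cite{S1}, and your energy estimate, $L^\infty$ bound, and local boundary De Giorgi--Nash--Moser estimates (with the interface defused by the support assumptions on $g$ and $\gamma$) are precisely what those references carry out for Lipschitz domains. The only caveat is that your Poincar\'e--Robin step implicitly needs $\gamma$ to be bounded below on a set of controlled size (otherwise the problem determines $u$ only up to an additive constant and no a priori $H^1$ bound is possible), a normalization the paper likewise leaves implicit.
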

\begin{proof}
This is a standard regularity estimate up to the boundary. The Moser iteration techniques  \cite[Theorem 8.18]{gt} fits to this task. More details ban be found in \cite{S1}. Such arguments only require the Lipschitz regularity of $\partial \Omega$.

\end{proof}

\begin{lemma}\label{regolaritaimproved}
Let $\Omega$ be a $C^{1,\alpha}$ domain with $0<\alpha\le 1$ .
Let $u\in H^1(\Omega)$ be a solution to \eqref{P} with $\gamma$ and $g$ satisfying the a-priori assumptions stated above. Then  there exists a constant $0<\alpha^{\prime}<1$ and a constant $C>0$  depending on the a-priori data only, such that $u\in C^{1,\alpha^{\prime}}(\bar{\Omega})$,  such that 
\begin{eqnarray}\label{holderimproved}
\|u\|_{C^{1,\alpha^{\prime}}(\bar{\Omega})}\le C \ .
\end{eqnarray}
\end{lemma}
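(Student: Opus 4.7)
The plan is to bootstrap from the H\"older continuity $u\in C^{\alpha_0}(\bar{\Omega})$ supplied by Lemma~\ref{regolaritarough} to $C^{1,\alpha'}$ regularity by invoking standard Schauder-type estimates for the mixed Neumann/Robin problem on a $C^{1,\alpha}$ domain. Interior regularity is automatic since $u$ is harmonic, so the only real issue is regularity up to the boundary. I would localize near an arbitrary point $P\in\partial\Omega$ via a $C^{1,\alpha}$ chart that flattens $\partial\Omega$ in a ball of radius comparable to $r_0$, reducing the problem to an elliptic equation with $C^{0,\alpha}$ coefficients on a half-ball with a boundary condition of Neumann or oblique type on the flat portion.

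Three cases then have to be treated. If $P\in\Gamma_A$ lies at positive distance from the interface $\overline{\Gamma_A}\cap\overline{\Gamma_I}$, the boundary condition is $\partial u/\partial\nu=g$ with $g\in C^{0,\alpha}$, and classical Schauder estimates (cf.~\cite{gt}) yield $u\in C^{1,\alpha}$ locally, with a constant depending only on $\|u\|_{L^\infty(\Omega)}$ (already controlled by Lemma~\ref{regolaritarough}) and $\|g\|_{C^{0,\alpha}}$. If $P\in\Gamma_I$ lies at positive distance from the interface, I rewrite the Robin condition as $\partial u/\partial\nu=-\gamma u$; since $\gamma\in C^{0,1}$ and $u\in C^{\alpha_0}$, the right-hand side $-\gamma u$ belongs to $C^{\alpha_0}$, and the same Schauder estimate applies. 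Finally, near the interface itself the support assumption $\mathrm{supp}\,g\subset\Gamma_A^{\hat r}$ is crucial: $g$ vanishes in a full boundary neighbourhood of $\overline{\Gamma_A}\cap\overline{\Gamma_I}$ on the $\Gamma_A$ side, so around the interface the whole boundary condition can be written in the unified Robin form $\partial u/\partial\nu+\tilde{\gamma}u=0$ with $\tilde{\gamma}$ extended by zero on $\Gamma_A$; then the boundary datum $-\tilde{\gamma}u$ is still H\"older on each of the two smooth pieces, and the Schauder machinery again delivers $C^{1,\alpha'}$ regularity, possibly with a slightly smaller exponent $\alpha'$.

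The main obstacle I anticipate is making all constants uniform in the a-priori data $r_0,M,d_0,\gamma_0,E,\hat r$. I would handle this by fixing a finite cover of $\bar{\Omega}$ by boundary charts whose cardinality is controlled solely by $r_0$ and $d_0$, and combining the local Schauder estimates with the uniform $L^\infty$-bound from Lemma~\ref{regolaritarough}. The resulting exponent $\alpha'$ is any number strictly less than $\min\{\alpha,\alpha_0\}$, and a routine covering argument then yields the global bound \eqref{holderimproved} with a constant $C$ depending only on the a-priori data.
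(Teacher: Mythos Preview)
Your approach is correct and is essentially the same as the paper's: both start from the H\"older bound of Lemma~\ref{regolaritarough} (Moser iteration) and then bootstrap to $C^{1,\alpha'}$ via Schauder-type estimates for the Neumann problem, which is exactly what the paper means by citing \cite{S1} and \cite{ADN}. Your localization and three-case analysis simply make explicit what the paper's one-sentence proof leaves to the references; the only cosmetic point is that near the interface you may as well note that, by $\mathrm{supp}\,\gamma\subset\Gamma_I$ and $\mathrm{supp}\,g\subset\Gamma_A^{\hat r}$, the boundary condition there reduces to a homogeneous Neumann condition, so $\tilde\gamma$ is globally Lipschitz and no ``two smooth pieces'' distinction is needed.
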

\begin{proof}
Again the proof relies in a slight adaptation of the arguments developed in \cite{S1} based on the Moser iteration technique and by well-known regularity bounds for the Neumann problem \cite[p.667]{ADN}.
\end{proof}

\begin{theorem}\label{localbnds}
Let $\Omega$ be a Lipschitz domain and let $v\in H^{1}\left( \Omega\right) $ be a solution to

\begin{eqnarray}
\Delta v=0  \text{ in }\Omega\text{.}
\
\end{eqnarray}
If its trace $v_{|\partial \Omega}\in H^{1}\left( \partial \Omega\right) $ then $\frac{
\partial v}{\partial \nu }_{|\partial \Omega}\in L^{2}\left( \partial \Omega\right) $
and we have
\begin{eqnarray}\label{normal}
\left \Vert \frac{\partial v}{\partial \nu }\right \Vert _{L^{2}\left(
\partial \Omega\right) }^{2}\leq C\left( \left \Vert \nabla _{T}v\right \Vert
_{L^{2}\left( \partial \Omega\right) }^{2}+\left \Vert v\right \Vert _{H^{1}\left(
\Omega\right) }^{2}\right) \text{.}
\end{eqnarray}
Conversely, if $\frac{\partial v}{\partial \nu }_{|\partial \Omega}\in
L^{2}\left( \partial \Omega\right) $ then $v_{|\partial \Omega}\in H^{1}\left(
\partial \Omega\right) $ and
\begin{eqnarray}\label{tangential}
\left \Vert \nabla _{T}v\right \Vert _{L^{2}\left( \partial \Omega\right) }^{2}\leq
C\left( \left \Vert \frac{\partial v}{\partial \nu }\right \Vert _{L^{2}\left(
\partial \Omega\right) }^{2}+\left \Vert v\right \Vert _{H^{1}\left( \Omega\right) }^{2}\right) \text{.}
\end{eqnarray}
Here $\nabla _{T}v$ denotes the tangential gradient of $v$ on $\partial \Omega$
and $C$ depends on $M,r_{0}$ and $d_0$ only.
\end{theorem}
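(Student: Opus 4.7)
The plan is to prove both inequalities through a single Rellich--Ne\v{c}as type identity applied to the harmonic function $v$, following the classical argument of Ne\v{c}as and of Jerison--Kenig adapted to Lipschitz domains. The starting point is the identity, valid for any $C^1$ vector field $\beta$ on $\overline{\Omega}$ and any $v\in C^2(\overline\Omega)$,
\begin{equation*}
\int_{\partial\Omega}\bigl[(\beta\cdot\nu)|\nabla v|^2-2(\beta\cdot\nabla v)\,\partial_\nu v\bigr]\,dS
=\int_\Omega\bigl[\mathrm{div}(\beta)\,|\nabla v|^2-2\,D\beta(\nabla v,\nabla v)-2(\beta\cdot\nabla v)\,\Delta v\bigr]dx,
\end{equation*}
which is obtained by integrating by parts the divergence $\mathrm{div}(\beta|\nabla v|^2-2(\beta\cdot\nabla v)\nabla v)$. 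Using $\Delta v=0$ and decomposing $\nabla v=\nabla_T v+(\partial_\nu v)\nu$ on $\partial\Omega$, the boundary integrand reduces to
\begin{equation*}
(\beta\cdot\nu)|\nabla_T v|^2-(\beta\cdot\nu)(\partial_\nu v)^2-2(\beta\cdot\nabla_T v)\,\partial_\nu v.
\end{equation*}

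The crucial step is to produce a Lipschitz vector field $\beta$ on $\overline\Omega$ that is transverse to the boundary, i.e.\ $\beta\cdot\nu\ge c_0>0$ almost everywhere on $\partial\Omega$, with $\|\beta\|_{W^{1,\infty}}$ controlled by $M,r_0,d_0$ only. I would construct $\beta$ by a finite partition of unity subordinated to a covering of $\partial\Omega$ by cylinders $B_{r_0}(P_i)$ as in the definition of Lipschitz class, gluing in each chart the pushed-forward field $-e_n$ (transverse to the graph $x_n=\varphi(x')$ thanks to \eqref{chap2:25l}), and extending to the interior in a bounded Lipschitz way; the number of charts needed is bounded in terms of $d_0/r_0$ and $M$. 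This is where the Lipschitz geometry of $\Omega$ enters quantitatively and is the main technical obstacle.

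Once $\beta$ is in hand, rearrange the identity to isolate $\int_{\partial\Omega}(\partial_\nu v)^2$:
\begin{equation*}
c_0\int_{\partial\Omega}(\partial_\nu v)^2\,dS\le\int_{\partial\Omega}(\beta\cdot\nu)|\nabla_T v|^2\,dS-2\int_{\partial\Omega}(\beta\cdot\nabla_T v)\,\partial_\nu v\,dS-\int_\Omega\bigl[\mathrm{div}(\beta)|\nabla v|^2-2D\beta(\nabla v,\nabla v)\bigr]dx,
\end{equation*}
and apply Cauchy--Schwarz with a small weight on the mixed boundary term together with the absorption $\tfrac{1}{2}c_0\int(\partial_\nu v)^2$ on the left. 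The interior term is bounded by $\|\beta\|_{W^{1,\infty}}\|v\|_{H^1(\Omega)}^2$. This yields \eqref{normal}. For the reverse direction \eqref{tangential} use the same identity but now isolate $\int_{\partial\Omega}(\beta\cdot\nu)|\nabla_T v|^2\ge c_0\int|\nabla_T v|^2$ on one side and estimate the remaining boundary and interior terms by Cauchy--Schwarz in the same way.

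Finally, since the Rellich identity requires $v\in C^2$ while here $v$ is only harmonic with $H^1$ trace (respectively with $L^2$ normal derivative), I would justify the identity by a standard approximation argument: exhaust $\Omega$ from inside by smoother Lipschitz subdomains $\Omega_j\nearrow\Omega$ (say via the level sets of the distance function to $\partial\Omega$, mollified), apply the identity for $v\in C^\infty(\overline{\Omega_j})$ and pass to the limit using that the traces on $\partial\Omega_j$ converge in $L^2$ to those on $\partial\Omega$ under either of the two hypotheses; this is standard once the global transverse vector field $\beta$ has been constructed.
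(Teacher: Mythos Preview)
Your approach is correct and is precisely the one the paper has in mind: the paper's own proof consists only of citing Rellich's identity \cite{R}, the Jerison--Kenig results \cite{JK}, and pointing to \cite[Proposition 5.1]{AMR} for a detailed argument in the present form. You have effectively reproduced the standard Rellich--Ne\v{c}as argument that underlies those references, including the construction of the transverse Lipschitz field, the boundary decomposition $\nabla v=\nabla_T v+(\partial_\nu v)\nu$, the absorption via Cauchy--Schwarz, and the interior approximation step; there is no substantive difference to flag.
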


\begin{proof} These inequalities follow from well-known Rellich's identity \cite{R}. Related estimates were first proven by Jerison and Kenig \cite{JK}. A detailed proof in the present form can be found in \cite[Proposition 5.1]{AMR}. 
\end{proof}

\begin{theorem}\label{h1}
Let $u$ be as in Lemma \ref{regolaritarough}, then
\begin{eqnarray}
\|u\|_{H^1(\partial \Omega)}\le C \ ,
\end{eqnarray}
where $C>0$ depends on the a priori data only.
\end{theorem}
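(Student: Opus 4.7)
The plan is to apply the converse Rellich inequality \eqref{tangential} of Theorem \ref{localbnds} with $v = u$. Two ingredients are needed: an $L^{2}(\partial \Omega)$ bound on $\partial u/\partial \nu$, and an $H^{1}(\Omega)$ bound on $u$. Both will come directly from Lemma \ref{regolaritarough} together with the a priori assumptions on $\gamma$ and $g$.

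For the normal derivative I would simply read off the boundary conditions in \eqref{P}. On $\Gamma_{A}$ we have $\partial u/\partial \nu = g$, which is bounded in $L^{\infty}$, hence in $L^{2}$, by \eqref{boundg}. On $\Gamma_{I}$ we have $\partial u/\partial \nu = -\gamma u$; here $\gamma$ is uniformly bounded by \eqref{boundgamma} and $u$ is uniformly bounded by Lemma \ref{regolaritarough}. Together this gives
\begin{equation}
\left\Vert \frac{\partial u}{\partial \nu}\right\Vert_{L^{2}(\partial \Omega)} \le C,
\end{equation}
with $C$ depending only on the a priori data.

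For the $H^{1}(\Omega)$ estimate I would test the weak formulation of \eqref{P} against $u$ itself, which yields
\begin{equation}
\int_{\Omega}|\nabla u|^{2} + \int_{\Gamma_{I}}\gamma u^{2} = \int_{\Gamma_{A}} g u.
\end{equation}
Since $\gamma \ge 0$, the second term on the left is nonnegative, so Cauchy-Schwarz together with the $L^{\infty}$ bound on $u$ furnished by Lemma \ref{regolaritarough} and the diameter bound \eqref{diameter} controls both $\|\nabla u\|_{L^{2}(\Omega)}$ and $\|u\|_{L^{2}(\Omega)}$ in terms of the a priori data. Plugging the two bounds above into \eqref{tangential} then yields $\|u\|_{H^{1}(\partial \Omega)}\le C$. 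I do not anticipate any genuine obstacle: the substantive content is packaged into Theorem \ref{localbnds}, whose Rellich-type argument is the only nontrivial ingredient needed to convert $L^{2}$ control of the normal derivative into $L^{2}$ control of the tangential gradient on a merely Lipschitz boundary.
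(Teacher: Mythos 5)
Your proposal is correct and follows essentially the same route as the paper: the paper's (one-sentence) proof likewise invokes the Rellich-type estimate \eqref{tangential}, the impedance and Neumann conditions together with \eqref{boundg} and \eqref{boundgamma} to bound $\partial u/\partial\nu$ in $L^2(\partial\Omega)$, and standard energy estimates for the $H^1(\Omega)$ bound. Your version merely spells out the details (the $L^\infty$ bound from Lemma \ref{regolaritarough} on $\Gamma_I$ and the weak formulation tested against $u$), which is exactly what the paper leaves implicit.
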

\begin{proof}
The proof is a consequence of \eqref{tangential} in combination with the impedance condition in \eqref{P}, the regularity assumption \eqref{boundg} on $g$ and standard estimates for solution to boundary value problem for the Laplace equation. 
\end{proof}

\section{\normalsize{The inverse problem}}\label{sec5}
\setcounter{equation}{0}
In this section we shall discuss the desired stability estimates. For a sake of exposition we first discuss in Subsection \ref{lb} the case when the boundary $\Gamma_I$ is of Lipschitz class only. While the treatment of the case when $\Gamma_I$ is $C^{1,1}$-smooth will follow in Subsection \ref{cb}.
\subsection{\normalsize{The Lipschitz corroded boundary case}}\label{lb}

\begin{lemma} \label{stimaduale}
Let $u\in H^{1}\left( \Omega\right) \cap C^{0}\left( \overline{\Omega}
\right) $,  be a solution to%
\begin{eqnarray}
\Delta u=0\text{ in }\Omega\text{.}
\end{eqnarray}
We have
\begin{eqnarray}\label{H^{-1}}
\left \Vert \frac{\partial u}{\partial \nu }\right \Vert _{H^{-1}\left(
\Gamma_I\right) }\leq C\left \Vert u\right \Vert _{L^{\infty }\left( \Omega\right) }
\end{eqnarray}
where $C$ depends on $M,r_{0}$ and $d_0$ only.
\end{lemma}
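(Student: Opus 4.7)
The plan is to prove the bound by duality: since $H^{-1}(\Gamma_I)$ is the dual of $H^1_0(\Gamma_I)$, it suffices to show that
\[
|\langle \partial_\nu u,\phi\rangle| \le C \,\|u\|_{L^\infty(\Omega)}\|\phi\|_{H^1(\Gamma_I)}
\]
for every $\phi \in H^1_0(\Gamma_I)$, and then take the supremum over the unit ball.

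First I extend $\phi$ by zero across $\partial \Gamma_I$ to a function $\tilde\phi\in H^1(\partial\Omega)$; this is legitimate because $\phi$ has null trace on $\partial\Gamma_I$, and the extension-by-zero is continuous in the relevant norms (here the Lipschitz regularity of $\partial\Omega$ makes the tangential differentiation on $\partial\Omega$ well defined). Next I let $\Phi\in H^1(\Omega)$ denote the harmonic extension of $\tilde\phi$ to $\Omega$, which satisfies $\|\Phi\|_{H^1(\Omega)}\le C\|\tilde\phi\|_{H^{1/2}(\partial\Omega)}\le C\|\phi\|_{H^1(\Gamma_I)}$ by standard Dirichlet theory.

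The crucial step will be to promote the a priori $H^{-1/2}$ regularity of $\partial_\nu \Phi$ up to $L^2(\partial\Omega)$. This is precisely what Theorem \ref{localbnds} (Rellich's identity) delivers: since $\tilde\phi\in H^1(\partial\Omega)$, estimate \eqref{normal} yields
\[
\|\partial_\nu \Phi\|_{L^2(\partial\Omega)}\le C\bigl(\|\nabla_T\tilde\phi\|_{L^2(\partial\Omega)}+\|\Phi\|_{H^1(\Omega)}\bigr)\le C\|\phi\|_{H^1(\Gamma_I)}.
\]

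Finally I invoke Green's second identity for the two harmonic $H^1(\Omega)$ functions $u$ and $\Phi$, which reads
\[
\langle \partial_\nu u,\tilde\phi\rangle_{H^{-1/2}(\partial\Omega),H^{1/2}(\partial\Omega)} = \int_{\partial\Omega} u\,\partial_\nu \Phi \, d\sigma,
\]
the right-hand side being an honest integral thanks to the $L^\infty \cdot L^2$ pairing just established. Since $\tilde\phi$ is supported in $\Gamma_I$, the left-hand side agrees with $\langle \partial_\nu u,\phi\rangle_{H^{-1}(\Gamma_I),H^1_0(\Gamma_I)}$, and Cauchy--Schwarz together with the previous bound on $\partial_\nu\Phi$ closes the estimate.

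The main obstacle is exactly this $L^2$-upgrade of $\partial_\nu \Phi$ on a merely Lipschitz boundary: the Green's-identity pairing against $u\in L^\infty$ is meaningless without it, and it is Theorem \ref{localbnds} (i.e.\ Rellich's identity applied to the auxiliary harmonic function $\Phi$ with $H^1$ boundary data) that makes this step available in the low-regularity setting relevant to the paper. All other ingredients---the extension-by-zero of $H^1_0(\Gamma_I)$ functions, the standard $H^{1/2}\to H^1$ Dirichlet estimate, and the duality argument---are routine.
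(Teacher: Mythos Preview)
Your proof is correct and follows essentially the same approach as the paper: take a test function $\phi\in H^1_0(\Gamma_I)$, let $\Phi$ be its harmonic extension to $\Omega$ (with zero data on $\Gamma_A$), apply Green's identity to transfer the normal derivative onto $\Phi$, and then invoke Theorem~\ref{localbnds} (Rellich's identity, estimate \eqref{normal}) to bound $\|\partial_\nu\Phi\|_{L^2(\partial\Omega)}$ in terms of $\|\phi\|_{H^1_0(\Gamma_I)}$. The only differences from the paper's argument are cosmetic (notation and the order in which the estimates are assembled).
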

\begin{proof}
By standard result on elliptic boundary value problem, for any $\zeta\in H^1_0(\Gamma_I)$ we  can
consider the unique solution $\varphi \in H^{1}\left(\Omega\right) $ to
the Dirichlet problem
\begin{eqnarray}
\left \{
\begin{array}{c}
\Delta \varphi  =0\text{ in }\Omega \ ,\\
\varphi =\zeta \text{ on }\Gamma_I \ , \\
\varphi =0\text{ on }\Gamma_A \ .
\end{array}
\right.
\end{eqnarray}
Moreover we have
\begin{eqnarray}\label{estim}
\left \Vert \varphi \right \Vert _{H^{1}\left( \Omega\right) }\leq
C\left \Vert \zeta \right \Vert _{H_{00}^{1/2}\left( \Gamma_I\right) }
\end{eqnarray}
with $C>0$ only depending on the a priori data. 
By the Green's identity we have that 
\begin{eqnarray}
\int_{\Gamma_I}\varphi \frac{\partial u}{\partial \nu }=\int_{\partial \Omega}u\frac{\partial \varphi}{\partial \nu} 
\end{eqnarray}
hence
\begin{eqnarray}
\left\vert\int_{\Gamma_I}\zeta \frac{\partial u}{\partial \nu }\right\vert
 \leq \int_{\partial \Omega}\left\vert u\frac{\partial \varphi }{%
\partial \nu }\right\vert
\end{eqnarray}
applying \eqref{normal} to $\varphi $ and taking into account \eqref{holderrough} and \eqref{estim} we get
\begin{eqnarray}
\left \vert \int_{\Gamma_I}\zeta \frac{\partial u}{\partial \nu }
\right \vert &\leq& C\|u\|_{L^{\infty}(\Omega)}\left(\|\zeta\|_{H_0^1(\Gamma_I)} + \|\varphi\|_{H^1(\Omega)} \right)\\
&\leq & C \|u\|_{L^{\infty}(\Omega)}\|\zeta\|_{H_0^1(\Gamma_I)}
\end{eqnarray}
and the thesis follows by duality.
\end{proof}

\begin{theorem}\label{stabcp}
Let $u_i, i=1,2$ be as in Theorem \ref{stabrough}. If for some $\eps$ \eqref{err} holds, then 
\begin{eqnarray}
\|u_1-u_2 \|_{L^{\infty}(\Gamma_I)}\le \eta(\eps)
\end{eqnarray}
where $\eta$ is the modulus of continuity introduced in \eqref{eta}.
\end{theorem}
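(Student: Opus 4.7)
The plan is to reduce the estimate to quantitative unique continuation for harmonic functions with Cauchy data prescribed on $\Gamma_A$. Setting $w := u_1 - u_2$, the function $w$ is harmonic in $\Omega$ and, since $u_1$ and $u_2$ share the same Neumann datum $g$ on $\Gamma_A$, satisfies $\partial_\nu w = 0$ there. Combined with the hypothesis \eqref{err}, this provides small Cauchy data on $\Gamma_A$: the Dirichlet trace is bounded by $\eps$ in $L^2(\Gamma_A)$ while the Neumann trace vanishes identically.

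From these Cauchy data I would invoke a standard chain-of-balls argument based on the three-spheres inequality for harmonic functions, propagating the smallness from an interior neighborhood of $\Gamma_A$ throughout $\Omega$. This yields a bound of the form $\|w\|_{L^2(K)} \le \tilde{\eta}(\eps)$ for every compact $K \subset \Omega$, with $\tilde{\eta}$ of logarithmic type as in \eqref{eta}. To push the propagation up to $\Gamma_I$ itself, one combines the interior three-spheres estimate with a boundary variant applied at points of $\Gamma_I$; this is available after flattening $\partial \Omega$ via the Lipschitz chart guaranteed by assumption \eqref{domain}, so that the boundary situation reduces to a standard interior three-spheres estimate in a half-ball. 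The uniform $L^\infty$ control of the $u_i$ provided by \eqref{holderrough} furnishes the global constant used at each step of the iteration.

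To upgrade the interior $L^2$ smallness to the $L^\infty$ bound on $\Gamma_I$ stated in the theorem, I would interpolate with the uniform H\"older estimate \eqref{holderrough}: a function bounded in $C^{\alpha}(\overline{\Omega})$ and small in $L^2$ on a boundary region is automatically small in $L^\infty$ there, with a polynomial penalty in the $L^2$ norm. Composing this power-type interpolation with the logarithmic bound $\tilde{\eta}$ keeps us inside the class \eqref{eta}, and the conclusion $\|w\|_{L^\infty(\Gamma_I)} \le \eta(\eps)$ follows.

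The main obstacle is the boundary propagation step: the chain of balls used in the three-spheres iteration must respect the Lipschitz geometry of $\partial \Omega$, and in particular it must handle points of $\Gamma_I$ approaching the interface $\overline{\Gamma_A} \cap \overline{\Gamma_I}$, where the separation between accessible and inaccessible portions degenerates and the number of balls needed in the chain blows up. Since the theorem is explicitly \emph{recalled} from the literature, the quantitative dependence of the exponent $\vartheta$ in \eqref{eta} on the a priori constants $r_0$ and $M$ is inherited from the cited references rather than re-derived here.
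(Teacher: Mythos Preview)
Your outline matches the argument the paper defers to (Proposition~4.4 in \cite{S1}): set $w=u_1-u_2$, observe that it is harmonic with vanishing Neumann and small Dirichlet data on $\Gamma_A$, propagate smallness through the interior via an iterated three-spheres inequality, and upgrade to an $L^\infty$ bound on $\Gamma_I$ using the uniform $C^\alpha$ estimate \eqref{holderrough}. So the route is the same one the paper cites.

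One step should be adjusted. You invoke a ``boundary variant'' of the three-spheres inequality at points of $\Gamma_I$, obtained after flattening. But $w$ does not satisfy a homogeneous condition on $\Gamma_I$: from the two Robin conditions one finds
\[
\frac{\partial w}{\partial\nu}=-\gamma_1 w-(\gamma_1-\gamma_2)u_2\quad\text{on }\Gamma_I,
\]
and the last term involves precisely the unknown $\gamma_1-\gamma_2$. A reflection across the flattened boundary therefore does not produce a clean interior divergence-form equation, and a boundary three-spheres estimate is not directly available on $\Gamma_I$ in this Lipschitz setting (contrast with Subsection~\ref{cb}, where the auxiliary quotient $z=u/v$ is introduced exactly to manufacture a homogeneous Neumann condition). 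Fortunately this step is unnecessary: the interior chain of balls already reaches any point at distance $d>0$ from $\Gamma_I$, with constants that deteriorate in a controlled way as $d\to 0$, and the H\"older bound \eqref{holderrough} then bridges the remaining gap at cost $Cd^\alpha$. Optimising over $d$ yields the logarithmic estimate on $\Gamma_I$. Since your final paragraph already contains this interpolation, the fix is simply to drop the boundary three-spheres at $\Gamma_I$ and let the H\"older continuity carry the argument to the boundary.
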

\begin{proof}
The proof follows by a slight adaptation of the argument developed in Proposition 4.4 in \cite{S1}.
\end{proof}

\begin{corollary}\label{cor}
Let $u_i, i=1,2$ be as in Theorem \ref{stabrough}, then we have that 
\begin{eqnarray}
\left\|\frac{\partial u_1}{\partial \nu }- \frac{\partial u_2}{\partial \nu }\right\|_{H^{-\frac{1}{2}}_{00}(\Gamma_I)}\le \eta(\eps)\ .
\end{eqnarray}
\end{corollary}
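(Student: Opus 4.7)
The plan is to combine the dual estimate of Lemma~\ref{stimaduale} with the $L^\infty$ propagation provided by Theorem~\ref{stabcp}, and then to interpolate the resulting negative-norm bound against an a priori unconditional $L^2$-bound for the normal derivative of $u_1-u_2$.

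First, I would set $w=u_1-u_2$, which is harmonic in $\Omega$. Since the proof of Lemma~\ref{stimaduale} uses only harmonicity and not the Robin condition, it yields
$$\left\|\frac{\partial w}{\partial \nu}\right\|_{H^{-1}(\Gamma_I)}\le C\|w\|_{L^\infty(\Omega)}.$$
By the maximum principle $\|w\|_{L^\infty(\Omega)}=\|w\|_{L^\infty(\partial\Omega)}$. On $\Gamma_I$ Theorem~\ref{stabcp} already gives $\|w\|_{L^\infty(\Gamma_I)}\le \eta(\eps)$, while on $\Gamma_A$ a classical interpolation between the uniform H\"older bound $\|w\|_{C^\alpha(\overline{\Omega})}\le C$ from Lemma~\ref{regolaritarough} and the hypothesis $\|w\|_{L^2(\Gamma_A)}\le \eps$ produces $\|w\|_{L^\infty(\Gamma_A)}\le C\eps^{\theta}$ for some $\theta\in(0,1)$. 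Any positive power of $\eps$ is dominated by a logarithmic modulus for small $\eps$, so after possibly renaming $\vartheta$ in \eqref{eta} I conclude
$$\left\|\frac{\partial w}{\partial \nu}\right\|_{H^{-1}(\Gamma_I)}\le \eta(\eps).$$

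Next, I would produce the complementary unconditional $L^2$-bound. Theorem~\ref{h1} applied to each $u_i$ gives $\|u_i\|_{H^1(\partial\Omega)}\le C$, hence $\|\nabla_T w\|_{L^2(\partial\Omega)}\le C$; together with the standard energy estimate $\|w\|_{H^1(\Omega)}\le C$, Theorem~\ref{localbnds} then yields $\|\partial_\nu w\|_{L^2(\Gamma_I)}\le C$, independently of $\eps$.

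Finally, I would apply real interpolation. Since $H^{1/2}_{00}(\Gamma_I)=[H^1_0(\Gamma_I),L^2(\Gamma_I)]_{1/2}$ by definition, duality (cf.~\cite[Chap.~1]{LiMa}) identifies $H^{-1/2}_{00}(\Gamma_I)$ with $[H^{-1}(\Gamma_I),L^2(\Gamma_I)]_{1/2}$, whence
$$\left\|\frac{\partial w}{\partial \nu}\right\|_{H^{-1/2}_{00}(\Gamma_I)}\le \left\|\frac{\partial w}{\partial \nu}\right\|^{1/2}_{H^{-1}(\Gamma_I)}\left\|\frac{\partial w}{\partial \nu}\right\|^{1/2}_{L^2(\Gamma_I)}\le C\,\eta(\eps)^{1/2},$$
which is still a logarithmic modulus of continuity in $\eps$ after suitable renaming of the constants. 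The only conceptually delicate point is the correct identification of $H^{-1/2}_{00}(\Gamma_I)$ as the interpolation space between $H^{-1}(\Gamma_I)$, viewed as the dual of $H^1_0(\Gamma_I)$, and $L^2(\Gamma_I)$; the remaining ingredients are a routine assembly of the tools prepared earlier in the section.
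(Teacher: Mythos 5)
Your proof is correct and follows essentially the same route as the paper: interpolate the $H^{-\frac{1}{2}}_{00}(\Gamma_I)$ norm between $H^{-1}(\Gamma_I)$ and $L^{2}(\Gamma_I)$, make the first factor small via Lemma \ref{stimaduale} applied to the harmonic difference $u_1-u_2$ together with Theorem \ref{stabcp}, and bound the second factor uniformly by the a priori data. The only (harmless) divergence is in the bounded factor: the paper obtains the $L^{2}(\Gamma_I)$ control of $\partial_\nu(u_1-u_2)$ directly from the impedance condition, writing it as $\|\gamma_1 u_1-\gamma_2 u_2\|_{L^{2}(\Gamma_I)}$ and invoking the uniform bounds of Lemma \ref{regolaritarough} and \eqref{boundgamma}, whereas you route it through the Rellich identity of Theorem \ref{localbnds} and Theorem \ref{h1}; both work, and you additionally spell out the $L^{\infty}(\Omega)$ smallness of $u_1-u_2$ (maximum principle plus H\"older--$L^2$ interpolation on $\Gamma_A$) that the paper's two-line proof leaves implicit.
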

\begin{proof}
By interpolation and the impedance condition we have that 
\begin{eqnarray}
\left\|\frac{\partial u_1}{\partial \nu }- \frac{\partial u_2}{\partial \nu }\right\|_{H^{-\frac{1}{2}}_{00}(\Gamma_I)}\le C\left\|\frac{\partial u_1}{\partial \nu }- \frac{\partial u_2}{\partial \nu }\right\|^{\theta}_{H^{-1}(\Gamma_I)}\left\|\gamma_1 u_1- \gamma_2u_2\right\|_{L^2(\Gamma_I)}^{1-\theta}
\end{eqnarray}
where $C>0, 0<\theta<1$ are constants depending on the a priori data only. Finally by Lemma \ref{stimaduale} and Theorem \ref{regolaritarough} we get the thesis.  
\end{proof}

\begin{theorem}\label{lipprop}
Let $u$ be a weak solution to \eqref{P}. For every $ r,\ 0<r<r_1$ and for every $x_0\in \Gamma_{I}^{r_0}$ we have that 
\begin{eqnarray}\label{lippropd}
\int_{\Delta_r(x_0)}u^2\ge \exp(-K r^{-K})
\end{eqnarray}
where $r_1=\min\{\frac{\rho}{2}, r_0, \frac{1}{4},k_1^{\frac{1}{k_2+1}} \}$ and $k_1,k_2, K>0$ only depend on the a priori data. 
\end{theorem}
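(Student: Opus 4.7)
The plan is to combine the \emph{Lipschitz Propagation of Smallness} (LPS) principle for harmonic functions from \cite{MR2,ABRV} with a H\"older transfer argument to the boundary. First I would establish a global lower bound $\|u\|_{L^2(\Omega)}\ge c_0>0$ depending only on the a-priori data. This would follow by a compactness/contradiction argument based on the $H^{1}$ bound of Theorem \ref{h1} together with the nontriviality of the Neumann datum $g$: any minimizing sequence would converge weakly in $H^{1}$ and strongly in $L^{2}$ to a harmonic limit with the same prescribed $g$, which cannot vanish identically.

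Next, fix a reference interior point $\bar x\in\Omega$ at distance $\ge r_{0}$ from $\partial\Omega$ where $\int_{B_{r_0/2}(\bar x)}u^{2}\ge c>0$ (using harmonicity and the baseline bound). I would then iterate the classical three-spheres inequality for the harmonic $u$ along a chain of balls contained in $\Omega$, starting at $\bar x$ and terminating at a ball $B_{\rho}(y)\subset\Omega$ with $|y-x_{0}|\le r/2$ and $\rho\simeq r$. The Lipschitz cone condition at $x_{0}\in\Gamma_I^{r_{0}}$ guarantees the existence of such a chain, with a number of links $N$ growing only logarithmically in $r^{-1}$. Since each three-spheres step raises the exponent by a fixed $\theta\in(0,1)$, after $N$ iterations one obtains
\[
\int_{B_{\rho}(y)\cap\Omega}u^{2}\;\ge\;c^{\,\theta^{-N}}\;\ge\;\exp(-K r^{-K}),
\]
with $K$ enlarged to absorb all constants depending only on the a-priori data. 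This is the core interior LPS estimate.

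The final step is to transfer this interior integral lower bound into the surface integral in \eqref{lippropd}. From the interior bound together with the $L^{\infty}$ control in Lemma \ref{regolaritarough}, a point $z\in\overline{B_{\rho}(y)\cap\Omega}$ may be selected with $|u(z)|\gtrsim\exp(-K r^{-K})$. Combining with the H\"older continuity of $u$, one deduces that $|u|\gtrsim\exp(-K r^{-K})$ on a ball $B_{\delta}(z)$ whose radius $\delta$ is a suitable power of $\exp(-Kr^{-K})$. By refining the last links of the chain so that $z$ lies within distance $\delta$ of $\Gamma_I$ (feasible thanks to the cone structure at $x_{0}$), the surface patch $B_{\delta}(z)\cap\Gamma_I$ sits inside $\Delta_{r}(x_{0})$ and has $(n-1)$-dimensional measure $\gtrsim\delta^{n-1}$; integrating the pointwise bound over it gives \eqref{lippropd} after a final enlargement of $K$.

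The main obstacle lies in this last step: both the H\"older ball radius $\delta$ and the required proximity of $z$ to $\Gamma_I$ are powers of the exponentially small quantity $\exp(-K r^{-K})$, so the chain must be constructed with geometric care and every constant introduced during the transfer has to be controlled uniformly in terms of the a-priori data. It is precisely the mere Lipschitz regularity of $\Gamma_I$ that limits the estimate to this exponential form; under the stronger $C^{1,1}$ hypothesis a surface doubling inequality replaces the H\"older transfer, upgrading the decay to the polynomial rate discussed in Subsection \ref{cb}.
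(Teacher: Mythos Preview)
Your interior LPS step matches the paper's: it too invokes \cite[Proposition 3.1]{MR2} (together with Caccioppoli and the gradient lower bound $\int_{\Omega}|\nabla u|^{2}\ge C\|g\|_{H^{-1/2}(\Gamma_{A})}$ in place of your compactness argument) to obtain $\int_{B_{r/8}(\bar{x})}u^{2}\ge C\exp(-k_{1}r^{-k_{2}})$ for an interior ball inside $\Gamma_{r/2}(x_{0})$. The gap is in your transfer to the surface. The H\"older argument suffers from a circularity that does not close: the point $z$ with $|u(z)|\gtrsim\exp(-Kr^{-K})$ is selected \emph{after} the chain terminates, and nothing forces it to lie within distance $\delta\sim\exp(-K'r^{-K})$ of $\Gamma_{I}$. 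If instead you extend the chain down to a ball of radius $\delta'$ at distance $\delta'$ from $\Gamma_{I}$ and pick $z$ there, the LPS bound at that scale is only $\exp\bigl(-c(\delta')^{-c}\bigr)$, and the resulting H\"older radius $\bigl(\exp(-c(\delta')^{-c})\bigr)^{1/\alpha}$ is strictly smaller than $\delta'$ for every small $\delta'$; hence $B_{\delta}(z)$ never reaches $\Gamma_{I}$. This obstruction is analytic, not geometric, and no refinement of the chain removes it.

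The paper bypasses this by using the \emph{stability estimate for the Cauchy problem} \cite[Theorem 1.7]{ARRV} in the reverse direction: it bounds $\|u\|_{L^{2}(\Gamma_{r/2}(x_{0}))}$ from above by a power of the Cauchy data on $\Delta_{r}(x_{0})$, and then invokes the impedance condition $\partial_{\nu}u=-\gamma u$ on $\Gamma_{I}$ to control the Neumann trace by the Dirichlet trace. Combined with interpolation and the $H^{1}(\partial\Omega)$ bound of Theorem~\ref{h1}, this yields
\[
\Bigl(\int_{\Delta_{r}(x_{0})}u^{2}\Bigr)^{\delta/2}\ \ge\ C\int_{\Gamma_{r/2}(x_{0})}u^{2},
\]
which converts the interior lower bound into the surface one with only a fixed power loss, producing \eqref{lippropd}. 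The impedance condition is used essentially in this step; your proposal never invokes it, which is why the transfer fails.
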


\begin{proof}
By the local stability estimates for the Cauchy problem discussed in \cite[Theorem 1.7]{ARRV} and the bounds established earlier in Theorem \ref{regolaritarough} and Theorem \ref{h1}, we get that for any $x_0\in \Gamma_{I}^{r_0}$ and any $0<r<r_1$ we have 
\begin{eqnarray}
\|u\|_{L^2(\Gamma_{\frac{r}{2}}(x_0))}\le C (\|u\|_{H^{\frac{1}{2}}(\Delta_r(x_0))} + \|\partial_{\nu}u\|_{H^{-\frac{1}{2}}(\Delta_r(x_0))})^{\delta}(\|u\|_{L^2(\Gamma_r(x_0))})^{1-\delta}
\end{eqnarray}
where $C>0, 0<\delta<1$ are constants depending on the a priori data only. Moreover, by the following interpolation inequality 
\begin{eqnarray}
\|u\|_{H^{\frac{1}{2}}(\Delta_r(x_0))}\le C\|u\|^{\frac{1}{2}}_{L^2(\Delta_r(x_0))}\|u\|^{\frac{1}{2}}_{H^{1}(\Delta_r(x_0))}
\end{eqnarray} 
where $C>0$ depends on the a priori data only, by the a priori bound in Theorem \ref{h1} and the impedance boundary condition we have that
\begin{eqnarray}\label{cpr}
\left(\int_{\Delta_r(x_0)}u^2 \right)^{\frac{\delta}{2}}\ge C \int_{\Gamma_{\frac{r}{2}}(x_0)}u^2 \ .
\end{eqnarray}

Let us consider $\bar{x}\in \Gamma_r(x_0)$ be such that $B_{\frac{r}{8}}(\bar{x})\subset\Gamma_{\frac{r}{2}}(x_0)$. We now recall that using the arguments of Lipschitz propagation of smallness developed in \cite[Proposition 3.1]{MR2}  we have that 
\begin{eqnarray}
\int_{B_{\frac{r}{16}}(\bar{x})}|\nabla u|^2\ge C \exp(-k_1r^{-k_2})\int_{\Omega}|\nabla u|^2
\end{eqnarray}
where $k_1$ and $k_2$ are positive constants depending on the a priori data only.

Combining the standard inequality 
\begin{eqnarray}
\int_{\Omega}|\nabla u|^2 \ge C_1 \|g\|_{H^{-\frac{1}{2}}(\Gamma_A)}
\end{eqnarray}
and the Caccioppoli inequality 
\begin{eqnarray}
\int_{B_{\frac{r}{16}}(\bar{x})}|\nabla u|^2\le C_2{r^{-{2}}}\int_{B_{\frac{r}{8}}(\bar{x})}|u|^2
\end{eqnarray}
where $C_1,C_2>0$ are constants depending on the a priori data only we have that
\begin{eqnarray}
\int_{B_{\frac{r}{8}}(\bar{x})}|u|^2 \ge C r^2 \exp(-k_1r^{-k_2})
\end{eqnarray}
where $C$ is a constant depending on the a priori data only. 

We observe that if $r<\min\{\frac{1}{4}, k_1^{\frac{1}{k_2+1}}\}$ we have that 
\begin{eqnarray}\label{stimabasso}
\int_{B_{\frac{r}{8}}(\bar{x})}|u|^2 \ge C \exp(-2k_1r^{-k_2})
\end{eqnarray}

Moreover, combining the trivial inequality $\int_{\Gamma_{\frac{r}{2}}(x_0)}u^2\ge \int_{B_{\frac{r}{8}}(\bar{x})}u^2$ with \eqref{cpr} we have that 
\begin{eqnarray}
\int_{\Delta_r(x_0)}u^2\ge C \exp(-\frac{4k_1}{\delta}r^{-k_2}) \ . 
\end{eqnarray}
Finally, we observe that it is possible to find a number $K>0$ depending on $C, k_1, k_2, \delta$ only such the thesis follows. 

\end{proof}

\begin{proposition}\label{weight}
Given $M,K>0$, let $w\ge 0$ be a measurable function on $\Gamma_I^{r_0}$ satisfying the conditions
\begin{eqnarray}
\|w\|_{L^{\infty}(\Gamma_I^{r_0})}\le M
\end{eqnarray}
and 
\begin{eqnarray}
\|w\|_{L^2(\Delta_r(x_0))}\ge \exp(-Kr^{-K})\ \ \mbox{for every}\ x\in \Gamma_I^{r_0} \ \mbox{and} \  r\in(0,r_1)
\end{eqnarray}
 where $r_1$ is as in Theorem \ref{lipprop} with $\rho=r_0$.
Let $f\in C^{\alpha}(\Gamma_I^{r_0})$ such that 
\begin{eqnarray}
|f(x) - f(y)|\le E |x-y|^{\alpha}\ \ \mbox{for every}\ x,y\in \Gamma_I^{r_0}\ .
\end{eqnarray}
If 
\begin{eqnarray}
\int_{\Gamma_I^{r_0}}|f|w\le \eps
\end{eqnarray}
then 
\begin{eqnarray}
\|f\|_{L^{\infty}(\Gamma_I^{r_0})}\le E \eta\left(\frac{\eps}{E}\right)
\end{eqnarray}
where $\eta$ satisfies \eqref{eta} with constants only depending on $M,K,r_0,\alpha, k_1, k_2$.
\end{proposition}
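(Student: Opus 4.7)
The plan is a localization argument around a near-maximum of $|f|$. Set $L = \|f\|_{L^\infty(\Gamma_I^{r_0})}$ and pick $x_0\in\Gamma_I^{r_0}$ with $|f(x_0)|\ge L/2$. By the H\"older hypothesis, for every $x\in\Gamma_I^{r_0}$ with $|x-x_0|\le\rho$ and $\rho\le (L/(4E))^{1/\alpha}$, one has $|f(x)|\ge L/4$. On the other hand, combining the uniform upper bound $\|w\|_{L^\infty(\Gamma_I^{r_0})}\le M$ with the hypothesized lower bound on $\|w\|_{L^2(\Delta_\rho(x_0))}$ via the pointwise inequality $w^2\le \|w\|_{L^\infty}\,w$ (valid since $w\ge 0$) yields
\begin{equation*}
\|w\|_{L^1(\Delta_\rho(x_0))} \ge \frac{\|w\|_{L^2(\Delta_\rho(x_0))}^2}{\|w\|_{L^\infty}} \ge \frac{1}{M}\exp(-2K\rho^{-K}).
\end{equation*}
Restricting the integral $\int_{\Gamma_I^{r_0}}|f|w$ to $\Delta_\rho(x_0)$ and combining the two lower bounds then gives
\begin{equation*}
\epsilon \ge \frac{L}{4M}\exp(-2K\rho^{-K}).
\end{equation*}

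Next I choose $\rho$ by balancing. In the regime $L<4Er_1^\alpha$, I set $\rho=(L/(4E))^{1/\alpha}$ so that $\rho^{-K}=(4E/L)^{K/\alpha}$; inverting the resulting transcendental inequality forces $L/E\le C(\log(E/\epsilon))^{-\alpha/K}$ for $\epsilon/E$ small, which is of the form $E\,\eta(\epsilon/E)$ with $\eta$ as in \eqref{eta} (taking $\vartheta=\alpha/K$). In the complementary regime $L\ge 4Er_1^\alpha$, I instead set $\rho=r_1$ and obtain the stronger Lipschitz-type bound $L\le 4M\epsilon\exp(2Kr_1^{-K})$, which is trivially dominated by $E\,\eta(\epsilon/E)$ after enlarging the constants in \eqref{eta}. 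Assembling the two cases yields the claim.

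The main obstacle is a small bookkeeping issue: ensuring the boundary ball $\Delta_\rho(x_0)$ sits inside the set where both the H\"older control on $f$ and the hypothesized lower bound on $w$ simultaneously apply, so that one is in fact integrating $|f|w$ on the \emph{same} subset of $\Gamma_I^{r_0}$. Since $x_0\in\Gamma_I^{r_0}$ and one is free to take $\rho\le r_1$, this reduces to a mild restriction on the size of $\rho$ (and possibly a harmless relabeling of the a priori constants), controlled by the choice of $r_1$ made in Theorem \ref{lipprop}. Once this is settled, the optimization itself is routine, the concavity and monotonicity of $\eta$ can be enforced by replacing $\eta$ with its concave envelope, and the exponent $\vartheta=\alpha/K$ in \eqref{eta} emerges directly from the balancing.
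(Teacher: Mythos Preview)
Your proposal is correct and follows essentially the same approach the paper intends. The paper does not spell out the proof of Proposition~\ref{weight} here (it defers to \cite[Proposition~1]{asv}), but the proof it \emph{does} give for the sister result Proposition~\ref{weight2} is exactly your strategy: pick a (near-)maximizer $\bar x$ of $|f|$, use the H\"older bound to write $|f(\bar x)|\le |f(x)|+Er^{\alpha}$ on $\Delta_r(\bar x)$, multiply by $w$, integrate, use the lower bound on $\int w$, and optimize in $r$. Your additional step of passing from the $L^2$ lower bound on $w$ to an $L^1$ lower bound via $w^2\le \|w\|_{L^\infty}w$ is the right bridge, and your transcendental inversion correctly produces the exponent $\vartheta=\alpha/K$ in \eqref{eta}; the bookkeeping issue you flag about $\Delta_\rho(x_0)\subset\Gamma_I^{r_0}$ is indeed harmless after possibly shrinking $r_1$.
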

\begin{proof}
The proof of such weighted interpolation inequality relies on slight adaptation of the arguments in \cite[Proposition 1]{asv}. 
\end{proof}

\begin{proof}[Proof of Theorem \ref{stabrough}.]

By a standard interpolation result we have that 

\begin{eqnarray}\label{intstan}
\|u_1(\gamma_1-\gamma_2)\|_{L^2(\Gamma_I)}\le C \|u_1(\gamma_1-\gamma_2) \|_{H^1(\Gamma_I)}^{\frac{1}{3}}\|u_1(\gamma_1-\gamma_2) \|_{H_{00}^{-\frac{1}{2}}(\Gamma_I)}^{\frac{2}{3}}
\end{eqnarray}
where $C>0$ is a constant depending on the a priori data only. 

We observe that 
\begin{eqnarray}
\|u_1(\gamma_1-\gamma_2) \|_{H^1(\Gamma_I)}\le \|\gamma_1-\gamma_2\|_{C^{0,1}(\Gamma_I)}\|u_1\|_{H^1(\Gamma_I)}\le C
\end{eqnarray}
where $C>0$ is a constant depending on the a priori data only. 

Moreover by the impedance condition on $\Gamma_I$ it follows that 
\begin{eqnarray}
\|u_1(\gamma_1-\gamma_2) \|_{H_{00}^{-\frac{1}{2}}(\Gamma_I)}\le \left\|\frac{\partial u_1}{\partial \nu}-\frac{\partial u_2}{\partial \nu}  \right\|_{H_{00}^{-\frac{1}{2}}(\Gamma_I)} + C\|u_1-u_2\|_{H_{00}^{-\frac{1}{2}}(\Gamma_I)}
\end{eqnarray}
where $C>0$ is a constant depending on the a priori data only. 

By combining the estimate in Theorem \ref{stabcp} and in Corollary \ref{cor} we obtain 
\begin{eqnarray}
\|u_1(\gamma_1-\gamma_2) \|_{H_{00}^{-\frac{1}{2}}(\Gamma_I)}\le \eta(\eps) \ . 
\end{eqnarray}
Hence by \eqref{intstan} we have that 
\begin{eqnarray}
\|u_1(\gamma_1-\gamma_2)\|_{L^2(\Gamma_I^{r_0})}\le \eta(\eps) \ . 
\end{eqnarray}
The conclusion follows by applying Proposition \ref{weight} with $w=|u_1|$ and $f=(\gamma_1-\gamma_2)^2$.
\end{proof}

\subsection{\normalsize{The $C^{1,1}$-smooth corroded boundary case}}\label{cb}

\begin{theorem}\label{stabcp2}
Let $u_i, i=1,2$ be as in Theorem \ref{stabimproved}. If for some $\eps$, \eqref{err} holds we have that
\begin{eqnarray}
\|u_1-u_2 \|_{C^1(\Gamma_I)}\le \eta(\eps)
\end{eqnarray}
where $\eta$ is given by \eqref{eta}.
\end{theorem}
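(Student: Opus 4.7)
The plan is to bootstrap the $L^{\infty}$ stability estimate on $\Gamma_I$ (already provided by the argument of Theorem \ref{stabcp}) up to a $C^{1}$ estimate by means of an interpolation inequality that exploits the stronger regularity now available, namely the uniform $C^{1,\alpha^{\prime}}(\overline{\Omega})$ bound from Lemma \ref{regolaritaimproved}.

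First, I would observe that the stability result of Theorem \ref{stabcp} still applies in this smoother setting, since the $C^{1,\alpha}$ hypothesis on $\Omega$ is strictly stronger than Lipschitz. Thus, repeating verbatim the adaptation of \cite[Proposition 4.4]{S1} that was invoked in the proof of Theorem \ref{stabcp}, one obtains
\begin{equation*}
\|u_1 - u_2\|_{L^{\infty}(\Gamma_I)} \le \eta(\eps)\ ,
\end{equation*}
where $\eta$ is of the form \eqref{eta}.

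Next, I would apply Lemma \ref{regolaritaimproved} separately to $u_1$ and $u_2$, obtaining a constant $C>0$ depending only on the a priori data such that
\begin{equation*}
\|u_1 - u_2\|_{C^{1,\alpha^{\prime}}(\overline{\Omega})} \le 2C\ .
\end{equation*}
By restriction to $\Gamma_I$ (which is of class $C^{1,1}$ and hence in particular $C^{1,\alpha^{\prime}}$), the same bound holds for the $C^{1,\alpha^{\prime}}(\Gamma_I)$ norm.

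The final step is the classical interpolation inequality for H\"older spaces on a sufficiently regular manifold: there exist constants $C>0$ and $\beta = \frac{\alpha^{\prime}}{1+\alpha^{\prime}} \in (0,1)$, depending only on the a priori data, such that for every $v\in C^{1,\alpha^{\prime}}(\Gamma_I)$
\begin{equation*}
\|v\|_{C^{1}(\Gamma_I)} \le C\, \|v\|_{L^{\infty}(\Gamma_I)}^{\beta}\, \|v\|_{C^{1,\alpha^{\prime}}(\Gamma_I)}^{1-\beta}\ .
\end{equation*}
Applying this to $v=u_1-u_2$ and plugging in the two previous bounds yields
\begin{equation*}
\|u_1 - u_2\|_{C^{1}(\Gamma_I)} \le C\, \eta(\eps)^{\beta}\ ,
\end{equation*}
which is still a modulus of continuity of the form \eqref{eta} (with the exponent $\vartheta$ rescaled by $\beta$), and the conclusion follows.

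The main obstacle is essentially a bookkeeping one: verifying that the Cauchy problem argument from \cite{S1} adapts seamlessly to yield an $L^{\infty}(\Gamma_I)$ bound under the current $C^{1,\alpha}$ regularity, and checking that the interpolation inequality can be formulated globally on $\Gamma_I$ (rather than in local charts only). Since $\Gamma_I$ is $C^{1,1}$ with bounded charts whose sizes depend only on $r_0, M$, the local interpolation inequalities can be patched together with constants depending only on the a priori data, so this difficulty is routine.
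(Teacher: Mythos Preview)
Your proposal is correct and follows essentially the same route as the paper, which merely cites \cite[Proposition 4.4]{S1} and \cite[Theorem 4.2]{S}: those references obtain a logarithmic stability for the Cauchy data and then upgrade it via the uniform $C^{1,\alpha'}$ a priori bound and H\"older interpolation, exactly as you outline. Your explicit write-up of the interpolation step, with $\beta=\alpha'/(1+\alpha')$, is the standard way this bootstrapping is carried out and amounts to spelling out what the paper leaves to the cited literature.
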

\begin{proof}
The proof can be achieved along the lines of Proposition 4.4 in \cite{S1} and Theorem 4.2 in \cite{S}.
\end{proof}

\begin{proposition}\label{dip}
Let $\Gamma_I$ be of class $C^{1,1}$ with constants $r_0, M$. Let $u$ be the solution to the problem \eqref{P}, then there exist constants $K_1>0, \bar{r}>0$ depending on the a priori data only, such that for every $x_0\in\Gamma_I^{r_0}$ and every $r\in (0,\bar{r})$ the following holds
\begin{eqnarray}\label{dii}
\int_{\Delta_{2r}(x_0)}u^2\le K_1 \int_{\Delta_{r}(x_0)}u^2 \ . 
\end{eqnarray}
\end{proposition}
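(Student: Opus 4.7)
\textbf{Proof plan for Proposition \ref{dip}.} The strategy, following \cite{ABRV,AMR2}, is to reduce the surface doubling inequality to an interior doubling inequality for a uniformly elliptic equation with Lipschitz principal part, via flattening of the boundary and a suitable reflection of the solution.

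\textbf{Step 1 (flattening and reflection).} Fix $x_0\in\Gamma_I^{r_0}$. Using the $C^{1,1}$ graph representation \eqref{chap2:1}--\eqref{chap2:25}, I would introduce a local $C^{1,1}$ diffeomorphism $\Phi$ sending $\G_{\bar r}(x_0)\cap\overline{\O}$ onto a half--ball $B_{\bar r}^{+}$ and $\Delta_{\bar r}(x_0)$ onto the flat disc $B'_{\bar r}\times\{0\}$. The pulled--back function $\tilde u=u\circ\Phi^{-1}$ satisfies a divergence--form elliptic equation $\operatorname{div}(A\n\tilde u)=0$ in $B_{\bar r}^{+}$ with $A$ symmetric, uniformly elliptic and with Lipschitz entries, together with a Robin condition $A\n\tilde u\cdot e_n+\tilde\gamma\,\tilde u=0$ on $B'_{\bar r}\times\{0\}$, where $\tilde\gamma$ is Lipschitz. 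I would then extend $\tilde u$ to the whole ball $B_{\bar r}$ by an even--type reflection modified by a correction $R(x',x_n)$, odd in $x_n$ and vanishing at first order on $\{x_n=0\}$, chosen so that the extension $U$ is $C^{1}$ across the flat part and solves
\begin{equation}
\operatorname{div}(\bar A\,\n U)+\bar b\cdot\n U+\bar c\,U=0\quad\text{in }B_{\bar r},
\end{equation}
with $\bar A$ uniformly elliptic and Lipschitz and $\bar b,\bar c$ bounded. This is the step in which the $C^{1,1}$ regularity of $\G_I$ is really used: it is exactly what guarantees that $\bar A$ is Lipschitz, the threshold regularity needed for the frequency--function estimates below.

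\textbf{Step 2 (interior doubling and transfer to the surface).} To $U$ I would apply the standard Garofalo--Lin type interior doubling inequality for elliptic operators with Lipschitz principal part and bounded lower--order terms (as formulated in \cite{AMR2}): there exist $C>0$ and $\bar r_1\in(0,\bar r)$, depending only on the a priori data, such that
\begin{equation}
\int_{B_{2r}}U^2\le C\int_{B_r}U^2\quad\text{for every }r\in(0,\bar r_1).
\end{equation}
Because of the reflection construction, $\int_{B_r}U^2$ is comparable to $\int_{B_r^{+}}\tilde u^2$ up to controlled lower--order terms. To pass from this volume doubling to the surface doubling for $\tilde u(\cdot,0)$ I would invoke the $C^{1,\alpha'}$ regularity granted by Lemma \ref{regolaritaimproved}: Taylor expansion in the normal direction combined with a trace and Poincar\'e inequality gives a two--sided comparison between $\int_{B_r^{+}}\tilde u^2$ and $r\int_{B'_r}\tilde u(x',0)^2\,dx'$. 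Chaining the three comparisons yields $\int_{B'_{2r}}\tilde u(\cdot,0)^2\le K_1\int_{B'_r}\tilde u(\cdot,0)^2$, and pulling back through $\Phi$ (whose Jacobian is bounded above and below by constants depending only on the a priori data) produces the desired estimate \eqref{dii}.

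\textbf{Main obstacle.} The genuinely delicate point is Step 1, namely designing a reflection under which $U$ remains a solution of a uniformly elliptic equation with Lipschitz principal part in a \emph{full} neighbourhood of the boundary point. The presence of the Robin term $\tilde\gamma\,\tilde u$ produces a jump in the conormal derivative after a plain even reflection, so the corrective term $R$ must be tailored to absorb that jump while being regular enough not to destroy the ellipticity or the Lipschitz character of $\bar A$. A secondary difficulty lies in Step 2: the doubling one naturally gets from an Almgren frequency argument is of the form $\int_{\partial B_r}U^2$, and passing from spherical doubling to the flat surface doubling $\int_{\Delta_r(x_0)}u^2$ requires combining it with interpolation between $L^2$ norms on concentric shells and the up--to--the--boundary H\"older regularity of $\n u$ provided by Lemma \ref{regolaritaimproved}.
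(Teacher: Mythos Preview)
Your strategy differs from the paper's in both of its two main steps, and each difference hides a real gap.

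\textbf{On Step 1 (handling the Robin condition).} The paper does not attempt to build a reflected extension that absorbs the Robin jump. Instead it introduces an auxiliary solution $v>0$ of a companion problem (same Robin condition on $\Gamma_I$, constant Neumann datum on $\Gamma_A$), shows $v\ge C>0$ by Harnack and Giraud, and sets $z=u/v$. The key observation is that $z$ solves $\operatorname{div}(v^{2}\nabla z)=0$ with a \emph{homogeneous Neumann} condition on $\Gamma_I$. This reduces the problem to one for which the Adolfsson--Escauriaza boundary doubling inequality applies directly. Your proposed ``odd corrective term $R$'' is left undefined, and it is not clear that any such additive correction can simultaneously (i) cancel the conormal jump produced by the Robin term and (ii) keep the principal part Lipschitz across $\{x_n=0\}$. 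The $z=u/v$ substitution is precisely the device that makes this step routine.

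\textbf{On Step 2 (volume to surface).} Your claimed two--sided comparison
\[
\int_{B_r^{+}}\tilde u^{2}\ \asymp\ r\int_{B'_r}\tilde u(x',0)^{2}\,dx'
\]
is false in general: take any harmonic function with $\tilde u(\cdot,0)\equiv 0$ but $\partial_n\tilde u(\cdot,0)\not\equiv 0$ (e.g.\ $\tilde u=x_n$). The $C^{1,\alpha'}$ Taylor expansion only yields $\int_{B_r^{+}}\tilde u^{2}\le Cr\int_{B'_r}\tilde u(\cdot,0)^{2}+Cr^{n+2}$, and the remainder can dominate when the surface trace is small, which is exactly the regime the doubling inequality must control. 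The paper does not argue this way: it passes from the volume doubling for $z$ to the surface doubling for $u$ via the \emph{stability estimate for the Cauchy problem} (cf.\ the estimate \eqref{cpr} and Theorems 4.5--4.6 in \cite{S}), using that on $\Gamma_I$ the Robin condition ties $\partial_\nu u$ to $u$ so that the Cauchy data are controlled by $\|u\|_{L^{2}(\Delta_r)}$ alone. That is the missing ingredient in your Step~2.
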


\begin{proof}
We provide here a sketch of the proof. 
Let $v\in H^1{(\Omega)}$ be the weak solution to the problem
\begin{equation}\label{P1}
\left\{
\begin{array}
{lcl}
\Delta v=0\ ,& \mbox{in $\Omega$ ,}
\\
\dfrac{\partial v}{\partial \nu}=1\ ,   & \mbox{on $\G_A$ ,}
\\
\dfrac{\partial v}{\partial \nu} + \gamma u =0 \ , & \mbox{on $\G_I$ .}
\end{array}
\right.
\end{equation}
Dealing as in the proof of Lemma 3.3 of \cite{S1} an relying on an iterated used of the Harnack inequality as well as the Giraud's maximum principle, we may infer that there exists a constant $C>0$ depending on the a priori data only such that $v(x)\ge C$ in $\overline{\Omega}$.

It is trivial to check that the function $z=\frac{u}{v}\in H^1({\Omega})$ satisfies 
\begin{equation}\label{P2}
\left\{
\begin{array}
{lcl}
\mbox{div}(v^2\nabla z)=0\ ,& \mbox{in $\Omega$ ,}
\\
v^2\dfrac{\partial z}{\partial \nu} =gv-u \ , & \mbox{on $\G_A$ ,}
\\
v^2\dfrac{\partial z}{\partial \nu} =0 \ , & \mbox{on $\G_I$ .}
\end{array}
\right.
\end{equation}
Let us observe that such change of variable allows us to treat a new boundary problem with an homogeneous Neumann condition on $\Gamma_I$ instead of the Robin one. 
By the arguments due to Adolfsson and Escauriaza in \cite{AdE} (see also \cite[Proposition 3.5]{ABRV}) we have that $u\in H^1(\Omega)$ satisfies the so called doubling inequality at the boundary which can be stated as follows. There exists a radius $\bar{r}$ depending on the a priori data only such that for any $x_0\in \Gamma^{r_0}_I$ the following holds 
\begin{eqnarray}
\int_{\Gamma_{\beta r}(x_0)}z^2\le C \beta^K\int_{\Gamma_{r}(x_0)}z^2
\end{eqnarray}
for every $r,\beta$ such that $\beta>1$ and $0<\beta r<  4\bar{r}$.

Now, we observe that repeating the arguments in Theorem 4.5 and in Theorem 4.6 in \cite{S} and mainly based on well-known stability estimate for the Cauchy problem we can reformulate the above volume doubling inequality at the boundary for the solution $z$ into a surface doubling inequality for the solution $u$. Indeed, we have that there exists a constant $K_1>0$ depending on the a priori data only, such that for any $x_0\in \Gamma^{r_0}_I$ 
and for every $r\in(0,\bar{r})$ the following holds
\begin{eqnarray}
\int_{\Delta_{2r}(x_0)}u^2\le K_1 \int_{\Delta_{r}(x_0)}u^2 \ , 
\end{eqnarray}
and the thesis follows.

\end{proof}

\begin{theorem}\label{lipprop2}
Let $\Gamma_I$ be of class $C^{1,1}$ with constants $r_0, M$.Let $u$ be a weak solution to \eqref{P}. For every $ r,\ 0<r<r_2$ and for every $x_0\in \Gamma_{I}^{r_0}$ we have that 
\begin{eqnarray}\label{lippropd2}
\int_{\Delta_r(x_0)}u^2\ge \frac{1}{K}r^{K}
\end{eqnarray}
where $r_2=\min\{\bar{r}, r_1\}$ and $K>0$ only depends on the a priori data. 
\end{theorem}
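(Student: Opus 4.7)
The plan is to combine the surface doubling inequality of Proposition \ref{dip} with the weaker exponential lower bound of Theorem \ref{lipprop} (which still applies, since a $C^{1,1}$-boundary is a fortiori Lipschitz). The idea is that a single application of Theorem \ref{lipprop} at one fixed scale gives a positive constant $c_0$, and then the doubling inequality lets me propagate this lower bound from the fixed scale down to arbitrarily small scales, with a polynomial loss because each halving costs only a multiplicative factor $K_1$.

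More concretely, fix once and for all the reference radius $\rho_\ast := r_2/2$, noting that $\rho_\ast < r_1$ and $2\rho_\ast = r_2 \le \bar r$. Applying Theorem \ref{lipprop} at $\rho_\ast$ yields
\begin{equation*}
\int_{\Delta_{\rho_\ast}(x_0)} u^2 \;\ge\; \exp\!\bigl(-K\rho_\ast^{-K}\bigr) \;=:\; c_0 > 0,
\end{equation*}
a constant depending only on the a priori data. Now, for any $r\in(0,\rho_\ast)$ choose the unique integer $j\ge 1$ with $\rho_\ast \le 2^j r < 2\rho_\ast \le \bar r$. For every $i = 0,1,\dots,j-1$ the radius $2^i r$ lies in $(0,\bar r)$, so Proposition \ref{dip} applies at each step and yields, by iteration,
\begin{equation*}
\int_{\Delta_{2^j r}(x_0)} u^2 \;\le\; K_1^{\,j} \int_{\Delta_{r}(x_0)} u^2 .
\end{equation*}
By monotonicity the left-hand side is at least $\int_{\Delta_{\rho_\ast}(x_0)}u^2 \ge c_0$, so
\begin{equation*}
\int_{\Delta_{r}(x_0)} u^2 \;\ge\; c_0\, K_1^{-j}.
\end{equation*}

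From $j \le 1 + \log_2(\rho_\ast/r)$ one obtains $K_1^{-j} \ge K_1^{-1}(r/\rho_\ast)^{\log_2 K_1}$, which gives the polynomial lower bound $\int_{\Delta_r(x_0)} u^2 \ge c\, r^{\log_2 K_1}$ for a constant $c$ depending only on the a priori data. Choosing $K$ large enough absorbs all the constants into the form $\frac{1}{K} r^K$. For the remaining range $r \in [\rho_\ast, r_2)$ the bound follows from the $\rho_\ast$-estimate and monotonicity, enlarging $K$ if necessary.

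The main technical point to check is simply that the chain of applications of Proposition \ref{dip} stays inside its range of validity — this is why $\rho_\ast$ is set to $r_2/2$, so that every intermediate radius $2^i r$ in the doubling iteration lies below $\bar r$, and simultaneously below $r_1$ so that the one-shot Theorem \ref{lipprop} bound at $\rho_\ast$ is available. There is no serious obstacle beyond bookkeeping of the constants; the geometric engine is entirely contained in Proposition \ref{dip}.
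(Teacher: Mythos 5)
Your proof is correct and follows essentially the same route as the paper: iterate the surface doubling inequality of Proposition \ref{dip} from a fixed reference scale comparable to $r_2$ down to $r$, and anchor the iteration with a positive lower bound at that reference scale coming from the Lipschitz propagation of smallness. The only (cosmetic) difference is that you obtain the anchor by invoking the statement of Theorem \ref{lipprop} directly at $\rho_\ast=r_2/2$, whereas the paper re-uses the intermediate estimates \eqref{cpr} and \eqref{stimabasso} from that theorem's proof; both yield a constant depending only on the a priori data.
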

\begin{proof}
 Let $x_0\in \Gamma_{I}^{r_0}$. Dealing as in \cite[Remark 4.11]{AMR2}, we have that 
\begin{eqnarray}\label{1p}
\int_{\Delta_{2^{1-j}{r_2}}(x_0)}u^2\le K_1 \int_{\Delta_{2^{-j}{r_2}}(x_0)}u^2 \ , \ \ \  \mbox{for every}\ \ j=2,3, ...
\end{eqnarray}
By iteration over $j$ we get 

\begin{eqnarray}\label{2p}
\int_{\Delta_{\frac{{r_2}}{2}}(x_0)}u^2\le K_1^{j-1} \int_{\Delta_{2^{-j}{r_2}}(x_0)}u^2 \ , \ \ \  \mbox{for every}\ \ j=2,3, ...
\end{eqnarray}

Hence for any $r<\frac{{r_2}}{2}$ and choosing $j$ such that 
\begin{eqnarray}
2^{-j}{r_2}\le r\le 2^{1-j}{r_2}
\end{eqnarray}
and 
\begin{eqnarray}
q=\frac{\log(K_1)}{\log(2)}
\end{eqnarray}
we have that 
\begin{eqnarray}\label{3p}
\int_{\Delta_{{{r}}(x_0)}}u^2\ge \left(\frac{r}{{r_2}}\right)^q\int_{\Delta_{{\frac{{r_2}}{2}}(x_0)}}u^2\ .
\end{eqnarray}
By \eqref{3p} and \eqref{cpr} we find that 
\begin{eqnarray}\label{4p}
\int_{\Delta_{{{r}}(x_0)}}u^2\ge\left(\frac{r}{{r_2}}\right)^q C \left(\int_{\Gamma_{{\frac{{r_2}}{2}}(x_0)}}u^2 \right)^{\frac{1}{\delta}} 
\end{eqnarray}
where $C>0$ is a constant depending on the a priori data only. 

Let $\bar{x}\subset \Gamma_{{\frac{{r_2}}{4}}(x_0)}$ be such that $B_{\frac{r_2}{32}}(\bar{x})\subset \Gamma_{{\frac{{r_2}}{4}}}(x_0)$. By \eqref{stimabasso} with $r=\frac{r_2}{4}$ we have that 
\begin{eqnarray}\label{5p}
\left(\int_{\Gamma_{{\frac{{r_2}}{4}}(x_0)}}u^2 \right)^{\frac{1}{\delta}} \ge C 
\end{eqnarray}
where $C$ is a constant depending on the a priori data only. 
Combining \eqref{4p} and \eqref{5p} we have that 
\begin{eqnarray}\label{6p}
\int_{\Delta_{{{r}}(x_0)}}u^2\ge C \left(\frac{r}{{r_2}}\right)^q 
\end{eqnarray}
where $C>0$ is a constant depending on the a priori data only. 

We conclude by observing that we may find a constant $K>0$ depending on the a priori data only such that the thesis follows. 
\end{proof}

\begin{proposition}\label{weight2}
Given $M,K>0$, let $w\ge 0$ be a measurable function on $\Gamma_I^{r_0}$ satisfying the conditions
\begin{eqnarray}\label{1w}
\|w\|_{L^{\infty}(\Gamma_I^{r_0})}\le M
\end{eqnarray}
and 
\begin{eqnarray}
\|w\|_{L^2(\Delta_r(x_0))}\ge \frac{1}{K}r^{K}\ \ \mbox{for every}\ x\in \Gamma_I^{r_0} \ \mbox{and} \  r\in(0,r_2)
\end{eqnarray}
 where $r_2$ is as in Theorem \ref{lipprop2}.
Let $f\in C^{\alpha}(\Gamma_I^{r_0})$ such that 
\begin{eqnarray}
|f(x) - f(y)|\le E |x-y|^{\alpha}\ \ \mbox{for every}\ x,y\in \Gamma_I^{r_0}\ .
\end{eqnarray}
If 
\begin{eqnarray}
\int_{\Gamma_I^{r_0}}|f|w\le \eps
\end{eqnarray}
then 
\begin{eqnarray}
\|f\|_{L^{\infty}(\Gamma_I^{r_0})}\le C\left(\frac{\eps}{E}\right)^{\delta^{\prime}}
\end{eqnarray}
where $C>0, 0<\delta^{\prime}<1$ are constants only depending on $M,K,r_0,\alpha, r_2$.
\end{proposition}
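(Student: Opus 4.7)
The plan is to mimic the strategy of Proposition \ref{weight}, but with the polynomial vanishing rate of $w$ producing a power-type (rather than logarithmic) modulus of continuity. First I would set $L=\|f\|_{L^{\infty}(\Gamma_I^{r_0})}$ and, assuming $L>0$, pick $x_0 \in \Gamma_I^{r_0}$ with $|f(x_0)|\ge L/2$. The H\"older assumption then gives $|f(x)|\ge L/2 - E r^{\alpha}$ on the surface ball $\Delta_r(x_0)$, so that
\begin{equation*}
|f(x)|\ge L/4 \quad \mbox{for every}\ x\in\Delta_r(x_0), \ \mbox{provided}\ r\le \left(\frac{L}{4E}\right)^{1/\alpha}.
\end{equation*}

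Second, I would upgrade the prescribed $L^2$ lower bound on $w$ to an $L^1$ lower bound. Using \eqref{1w} pointwise, $w^2\le Mw$, hence
\begin{equation*}
\int_{\Delta_r(x_0)} w \;\ge\; \frac{1}{M}\int_{\Delta_r(x_0)} w^2 \;\ge\; \frac{r^{2K}}{MK^{2}}.
\end{equation*}
Combining this with the lower bound $|f|\ge L/4$ on $\Delta_r(x_0)$ and with the hypothesis $\int_{\Gamma_I^{r_0}}|f|w\le \eps$, I obtain
\begin{equation*}
\eps\;\ge\; \int_{\Delta_r(x_0)}|f|\,w\;\ge\; \frac{L}{4}\cdot\frac{r^{2K}}{MK^{2}}.
\end{equation*}

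Third, I would choose $r := \min\{r_2,\,(L/(4E))^{1/\alpha}\}$ and split into two cases. If the H\"older scale is the binding constraint, the previous inequality reduces to $L^{1+2K/\alpha}\le C_{1}\,\eps\,E^{2K/\alpha}$, equivalently $L\le C_{2}\,E^{1-\delta'}\eps^{\delta'}$ with exponent $\delta'=\alpha/(\alpha+2K)\in(0,1)$; this has the claimed form after absorbing the factor $E$ (which is bounded by the a priori data) into the constant. If instead the geometric radius $r_2$ binds, then $L>4E r_2^{\alpha}$ is bounded away from zero, and setting $r=r_2$ gives the linear estimate $L\le C_{3}\,\eps$, which is even stronger in this regime. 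Either way the announced bound follows, with $C$ and $\delta'$ depending only on the a priori data.

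The main technical point is the bookkeeping in the case analysis arising from the two competing upper bounds on $r$; the core step --- localizing the supremum of a H\"older function against a quantitative lower bound on the weight $w$ --- is a direct adaptation of \cite[Proposition 1]{asv} to the polynomial-weight setting, and presents no serious obstacle.
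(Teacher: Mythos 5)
Your proposal is correct and follows essentially the same argument as the paper: localize at a near-maximizer of $|f|$, use the H\"older bound to control $f$ on a surface ball, convert the $L^2$ lower bound on $w$ into an $L^1$ lower bound via $w^2\le Mw$, compare with $\int |f|w\le\eps$, and optimize the radius to obtain $\delta'=\alpha/(2K+\alpha)$. The only (cosmetic) difference is that the paper minimizes the additive bound $\eps MK^2r^{-2K}+Er^{\alpha}$ over $r\in(0,r_2)$ rather than tying $r$ to $\|f\|_{L^{\infty}}$ and splitting into cases, and your observation that the factor $E$ must be absorbed into the constant is consistent with the paper's own computation.
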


\begin{proof}
By the bound in \eqref{1w} we have that 

\begin{eqnarray}
\int_{\Delta_r(x)}w^2\ge M^{-1} \frac{r^{2K}}{K^2}\ ,   \ \ \mbox{for every}\ x\in\Gamma_I^{r_0}\ \mbox{and}\ \ r\in(0,r_2) \ . 
\end{eqnarray}

Let now $\bar{x}$ be such that $|f(\bar{x})|=\| f\|_{L^{\infty}(\Gamma_I^{r_0})}$. By the H\"{o}lder regularity of $f$ we have that for every $r>0$ and $x\in\Delta_r(\bar{x})$ the following holds 
\begin{eqnarray}
|f(\bar{x})|\le |f(x)| + E r^{\alpha} \ . 
\end{eqnarray}
Multiplying the above inequality by the weight $w$ and integrating both sides over $\Delta_{r}(\bar{x})$ we obtain that 
\begin{eqnarray}
|f(\bar{x})|\int_{\Delta_r(\bar{x})}w \le \int_{\Delta_r(\bar{x})}w|f| + Er^{\alpha}\int_{\Delta_r(\bar{x})}w\ , 
\end{eqnarray}  
from which we deduce that 
\begin{eqnarray}
\|f\|_{L^{\infty}(\Gamma_I^{r_0})}&\le& \frac{\eps}{\int_{\Delta_r(\bar{x})}w } + Er^{\alpha}\le \\
&\le& \eps M K^2 r^{-2K} + Er^{\alpha}\ . 
\end{eqnarray}
Now minimizing over $r\in(0,r_2)$, the thesis follows with $\delta^{\prime}=\frac{\alpha}{2K+\alpha}$.
\end{proof}

\begin{proof}[Proof of Theorem \ref{stabimproved}.]
By the impedance condition we have that 
\begin{eqnarray}
\|u_1(\gamma_1 -\gamma_2)\|_{L^2(\Gamma_I^{r_0})}\le\left\|\frac{\partial u_1}{\partial \nu}-\frac{\partial u_2}{\partial \nu}  \right\|_{{L^2}(\Gamma_I^{r_0})} + C\|u_1-u_2\|_{{L^2}(\Gamma^{r_0}_I)}
\end{eqnarray}
where $C>0$ is a constant depending on the a priori data only. 

By Theorem \ref{stabcp2} we obtain that 
\begin{eqnarray}
\|u_1(\gamma_1 -\gamma_2)\|_{L^2(\Gamma_I^{r_0})}\le\eta(\eps) \ .
\end{eqnarray}
By applying the Proposition \ref{weight2} the thesis follows with $w=u_1$ and $\lambda=(\gamma_1-\gamma_2)^2$ up to a possible replacement of the constants $C$ and $\vartheta$ in \eqref{eta}.

\end{proof}

We now follow a slightly different strategy in order to prove Theorem \ref{stabimproved}. The main difference is based on the introduction of the notion of Muckenhoupt weights in Proposition \ref{mw}.
\begin{proposition}\label{mw}
Let $\Gamma_I$ be of class $C^{1,1}$ with constants $r_0, M$. Let $u$ be the solution to the problem \eqref{P}, then there exist constant $p>1, A>0$ depending on the a priori data only, such that for every $\Gamma_I^{r_0}$ and every $r\in (0,\bar{r})$ the following holds
\begin{eqnarray}\label{mwi}
\left (\frac{1}{|\Delta_r(x_0)|}\int_{\Delta_r(x_0)}u^2 \right)\left (\frac{1}{|\Delta_r(x_0)|}\int_{\Delta_r(x_0)}u^{-\frac{2}{p-1}} \right)^{p-1}\le A \ .
\end{eqnarray}

\end{proposition}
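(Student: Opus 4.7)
The strategy is to combine the surface doubling inequality of Proposition \ref{dip} with the polynomial lower bound of Theorem \ref{lipprop2} and the $L^\infty$ regularity of Lemma \ref{regolaritaimproved}, and then invoke the classical Coifman--Fefferman equivalence between doubling, reverse Hölder, and Muckenhoupt $A_p$ classes from \cite{CoFe}.

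First, I would view $u^2$ as a (non-negative) weight on the $(n-1)$-dimensional surface $\Gamma_I$. Proposition \ref{dip} asserts exactly that this weight is doubling on surface balls $\Delta_r(x_0)$ with $x_0\in\Gamma_I^{r_0}$ and $r\in(0,\bar r)$, with doubling constant $K_1$ depending only on the a priori data. By working in the local $C^{1,1}$ graph coordinates of Section \ref{def}, the surface ball $\Delta_r(x_0)$ is bi-Lipschitz equivalent to a Euclidean ball in $\R^{n-1}$, so that the standard machinery of doubling weights on $\R^{n-1}$ applies up to constants depending only on $r_0$ and $M$.

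Second, I would combine the doubling inequality with the quantitative bounds from both sides, namely the upper bound $\|u\|_{L^\infty(\partial\Omega)}\le C$ of Lemma \ref{regolaritaimproved} and the polynomial non-degeneracy $\int_{\Delta_r(x_0)}u^2\ge K^{-1}r^K$ of Theorem \ref{lipprop2}, to rule out large sublevel sets of $u^2$. Quantitatively, I expect to show the $A_\infty$ estimate: there exist $C,\eta>0$ depending only on the a priori data such that for every measurable $E\subset\Delta_r(x_0)$,
\begin{eqnarray}
\frac{\int_E u^2}{\int_{\Delta_r(x_0)}u^2}\le C\left(\frac{|E|}{|\Delta_r(x_0)|}\right)^{\eta}\ .
\end{eqnarray}
This is the key technical step: the doubling property alone is not enough, but joined with the uniform control of the vanishing rate it yields membership in $A_\infty$.

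Third, by the Coifman--Fefferman characterization \cite{CoFe}, the class $A_\infty$ coincides with $\bigcup_{p>1}A_p$, and the exponent $p$ and constant $A$ can be chosen to depend only on the $A_\infty$-constants $C,\eta$ above. This precisely yields \eqref{mwi}. Note that the finiteness of the negative moment $\int_{\Delta_r(x_0)}u^{-2/(p-1)}$ is automatic from the $A_p$ property once established.

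The main obstacle is the second step, i.e. upgrading the surface doubling inequality of Proposition \ref{dip} to a full $A_\infty$ estimate via the polynomial lower bound of Theorem \ref{lipprop2}. The classical Coifman--Fefferman argument is formulated for doubling measures on Euclidean balls; its transfer to the $C^{1,1}$ surface setting, and in particular the careful quantitative use of the vanishing-rate bound to prevent any degeneration of the weight $u^2$, is the delicate part. Once this $A_\infty$ estimate is in place, the passage to \eqref{mwi} is a direct application of \cite{CoFe}.
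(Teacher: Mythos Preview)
Your overall framework (surface doubling $\Rightarrow$ $A_\infty$ $\Rightarrow$ $A_p$ via Coifman--Fefferman) is indeed the route the paper follows by reference to \cite{S}, but your Step~2 contains a genuine gap.

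The ingredients you propose to upgrade doubling to $A_\infty$ are not sufficient. First, the polynomial lower bound of Theorem~\ref{lipprop2} is \emph{derived from} the surface doubling inequality (look at its proof: it is obtained by iterating \eqref{dii}), so it carries no information beyond doubling itself. Second, the global bound $\|u\|_{L^\infty}\le C$ from Lemma~\ref{regolaritaimproved} only yields $\int_E u^2\le C^2|E|$; combined with $\int_{\Delta_r}u^2\ge K^{-1}r^K$ this gives
\[
\frac{\int_E u^2}{\int_{\Delta_r}u^2}\;\le\; C^2 K\,\frac{|E|}{r^{K}}\;\sim\;C'\,\frac{|E|}{|\Delta_r|}\,r^{\,n-1-K},
\]
which blows up as $r\to 0$ whenever $K>n-1$ (and $K$ here depends on the doubling constant, hence on the vanishing order of $u$, and is typically large). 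So no uniform $A_\infty$ estimate follows from these bounds. In general, doubling plus a global $L^\infty$ bound plus a polynomial lower bound on balls does \emph{not} force a weight into $A_\infty$.

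What actually closes the gap in \cite{S} is a \emph{local} reverse estimate coming from the elliptic equation, not the global bounds you invoke. Concretely, one couples the volume doubling of Adolfsson--Escauriaza (used inside the proof of Proposition~\ref{dip}) with the local $L^\infty$--$L^2$ estimate for solutions (Moser/subsolution bound) to get, after transfer to the surface via the Cauchy-problem stability argument, a reverse H\"older inequality of the type
\[
\sup_{\Delta_r(x_0)} u^2 \;\le\; C\,\fint_{\Delta_{cr}(x_0)} u^2,
\]
uniformly in $x_0\in\Gamma_I^{r_0}$ and $r<\bar r$. This is $RH_\infty$, which by \cite{CoFe} places $u^2$ in $A_1\subset A_p$ with constants depending only on the a~priori data. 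The point is that the PDE gives you a \emph{scale-invariant local} upper bound, whereas your Step~2 only uses a global one; that is precisely the missing idea.
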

\begin{proof}
For a detailed proof we refer to Corollary 4.7 in \cite{S}. The main tools of the proof relies on the above mentioned surface doubling inequality \eqref{dii} and the theory of Muckenhoupt weights \cite{CoFe} as well.

\end{proof}

\begin{proof}[Alternative proof of Theorem \ref{stabimproved}.]
Let $x_0\in \Gamma_I^{r_0}$. Let us choose $r=\frac{\bar{r}}{2}$, where $\bar{r}$ is the radius in Proposition \ref{mw}. By the lower bound in \eqref{lippropd2} with $r=\frac{\bar{r}}{2}$ and with $u=u_2$ we have that 
\begin{eqnarray}\label{lblp}
\int_{\Delta_{\frac{\bar{r}}{2}}(x_0)}u_2^2\ge C
\end{eqnarray}
where $C>0$ is a constant depending on the a priori data only. 

Combining \eqref{mwi} and \eqref{lblp}, we have that for every $x_0\in \Gamma_I^{r_0}$ the following holds
\begin{eqnarray}\label{int}
\left(\int_{\Delta_{\frac{\bar{r}}{2}}(x_0)} |u_2|^{-\frac{2}{p-1}}\right)^{p-1}\le C \ , 
\end{eqnarray}
where $C>0$ is a constant depending on the a priori data only. 

Let us now consider $x\in \Delta_{\frac{\bar{r}}{2}}(x_0)$, then by Theorem \ref{stabcp2} and by \eqref{boundgamma} we have that 
\begin{eqnarray}\label{diff}
|\gamma_1(x)-\gamma_2(x)|\le (\gamma_0 +1)\eta(\eps)\frac{1}{|u_2(x)|} \ .
\end{eqnarray}
Denoting with $\beta= \frac{2}{p-1}$ and combining \eqref{int} and \eqref{diff} we find  that 
\begin{eqnarray}\label{stimaint}
\left(\int_{\Delta_{\frac{\bar{r}}{2}}(x_0)} |\gamma_1(x)-\gamma_2(x)|^{\beta}\right)^{\frac{1}{\beta}}\le \eta(\eps) \ .
\end{eqnarray}
By the a priori bound \eqref{boundgamma}, we get that 
\begin{eqnarray}
\|\gamma_1-\gamma_2 \|_{L^2(\Delta_{\frac{\bar{r}}{2}}(x_0))}\le (2\gamma_0)^{1-\frac{\beta}{2}}\left(\int_{\Delta_{\frac{\bar{r}}{2}}(x_0)} |\gamma_1(x)-\gamma_2(x)|^{\beta} \right)^{\frac{1}{2}}
\end{eqnarray}
which in turn combined with \eqref{stimaint} implies that by a possible further replacement of the constants $C, \theta$ in \eqref{eta} we have 
\begin{eqnarray}\label{stabl2}
\|\gamma_1-\gamma_2\|_{L^2(\Delta_{\frac{\bar{r}}{2}}(x_0))}\le \eta(\eps)\ .
\end{eqnarray}
By interpolation we have that 
\begin{eqnarray}
\|\gamma_1-\gamma_2\|_{L^{\infty}(\Delta_{\frac{\bar{r}}{2}}(x_0))}\le C \|\gamma_1-\gamma_2\|^{\frac{1}{2}}_{L^2(\Delta_{\frac{\bar{r}}{2}}(x_0))}\|\gamma_1-\gamma_2\|^{\frac{1}{2}}_{C^{0,1}(\Delta_{\frac{\bar{r}}{2}}(x_0))}
\end{eqnarray}
where $C>0$ is a constant depending on the a priori data only. 

Hence by the a priori bound \eqref{boundgamma} and \eqref{stabl2} we have that by a possible further replacement of the constants $C, \theta$ in \eqref{eta} we have 
\begin{eqnarray}
\|\gamma_1-\gamma_2\|_{L^{\infty}(\Delta_{\frac{\bar{r}}{2}}(x_0))}\le \eta(\eps)
\end{eqnarray}
By a covering argument we finally deduce the thesis. 

\end{proof}


\end{document}